\documentclass[reqno]{amsart}

\usepackage{tabu}
\usepackage{amssymb}
\usepackage{mathtools}
\usepackage{a4wide,amsmath}
\usepackage{mathrsfs}
\usepackage{amsthm}
\numberwithin{equation}{section}
\numberwithin{figure}{section}
\numberwithin{table}{section}
\usepackage{bbm}
\usepackage{subfig}
\usepackage{enumerate}
\usepackage{needspace}
\usepackage[section]{placeins}
\usepackage{graphicx}		  
\usepackage{ifpdf}
\ifpdf
\DeclareGraphicsExtensions{.pdf,.eps,.jpg,.png}	
\usepackage[suffix=]{epstopdf}
\fi
\usepackage{xcolor}
\usepackage[utf8]{inputenc}
\usepackage{hyperref}
\hypersetup{hidelinks}

\long\def\MSC#1\EndMSC{\def\arg{#1}\ifx\arg\empty\relax\else
	{\narrower\noindent%
		{2020 Mathematics Subject Classification}: #1\\} \fi}
\long\def\PACS#1\EndPACS{\def\arg{#1}\ifx\arg\empty\relax\else
	{\narrower\noindent%
		{PACS numbers}: #1}\fi}
\long\def\KEY#1\EndKEY{\def\arg{#1}\ifx\arg\empty\relax\else
	{\narrower\noindent%
		Keywords: #1\\}\fi}

\theoremstyle{plain}
\newtheorem{theorem}{Theorem}[section]

\newtheorem{proposition}[theorem]{Proposition}
\newtheorem{corollary}[theorem]{Corollary}
\theoremstyle{definition}
\newtheorem{definition}[theorem]{Definition}

\newtheorem{assumption}[theorem]{Assumption}
\theoremstyle{remark}
\newtheorem{remark}[theorem]{Remark}

\newcommand{\norm}[1]{\lVert#1\rVert}
\newcommand{\abs}[1]{\lvert#1\rvert} 
\newcommand{\inner}[1]{\langle#1\rangle} 
\newcommand{\redel}{\mathop{\textup{Re}}}
\newcommand{\imdel}{\mathop{\textup{Im}}}

\newcommand{\suppm}{\mathop{\textup{supp}}}
\newcommand{\R}{\mathbb{R}}
\newcommand{\N}{\mathbb{N}}
\newcommand{\C}{\mathbb{C}}

\newcommand{\I}{\mathrm{i}}    
\newcommand{\di}{\mathrm{d}}   

\newcommand{\upR}{^{\textup{R}}}
\newcommand{\upI}{^{\textup{I}}}
\newcommand{\uptR}{^{\!\textup{R}}}
\newcommand{\uptI}{^{\!\textup{I}}}
\newcommand{\LambdaR}{\Lambda\uptR}
\newcommand{\LambdaI}{\Lambda\uptI}
\newcommand{\AR}{A\upR}
\newcommand{\AI}{A\upI}
\newcommand{\Hsa}{\mathcal{H}_{\textup{SA}}}
\newcommand{\DLambda}{\textup{D}\mkern-1.5mu \Lambda}

\begin{document}
	\title[Reconstruction of non-self-adjoint inclusions]{Reconstruction of non-self-adjoint anisotropic and complex inclusions in the Calder\'on problem}
	
	\author[H.~Garde]{Henrik~Garde}
	\address[H.~Garde]{Department of Mathematics, Aarhus University, Aarhus, Denmark.}
	\email{garde@math.au.dk}
	
	\author[D.~Johansson]{David~Johansson}
	\address[D.~Johansson]{Department of Mathematics, Aarhus University, Aarhus, Denmark.}
	\email{johansson@math.au.dk}
	
	\author[T.~Zacharopoulos]{Thanasis~Zacharopoulos}
	\address[T.~Zacharopoulos]{Department of Mathematics, Aarhus University, Aarhus, Denmark.}
	\email{thanzacharop@math.au.dk}
	
	\begin{abstract}
		We generalize recent results on the monotonicity method, for inclusion detection in the partial data anisotropic Calder\'on problem, to very general non-self-adjoint perturbations. This involves a forward model that accounts for both the anisotropic real conductivity and the anisotropic permittivity, and the results hold in any spatial dimension $d\geq 2$. We assume that the inclusion boundaries can be reached from the domain boundary via a set on which the background conductivity is self-adjoint, and that a definiteness condition holds near the inclusion boundaries. Away from the inclusion boundaries we allow general $L^\infty$ non-self-adjoint perturbations. We only require unique continuation based on the self-adjoint part of the background conductivity, thus making the methods compatible with generic unique continuation results.   
	\end{abstract}	
	\maketitle
	
	\KEY
	anisotropic Calder\'on problem, non-self-adjoint, inclusion detection, monotonicity method.
	\EndKEY
	
	\MSC
	35R30, 35R05, 47H05.
	\EndMSC
	
	\section{Introduction} \label{sec:intro}
	
	We consider the partial data anisotropic Calder\'on problem, on reconstructing information about a matrix-valued conductivity coefficient $A$ in the equation
	\begin{equation*}
		-\nabla \cdot(A\nabla u) = 0 \text{ in } \Omega,
	\end{equation*}
	based on the Neumann-to-Dirichlet (ND) map $\Lambda(A)$ on an open subset $\Gamma\subseteq \partial\Omega$. Specific to the anisotropic problem, there are significant obstructions to uniqueness in determining a coefficient $A$ from $\Lambda(A)$, see e.g.~\cite{Astala2005,Sylvester1990,Lee1989}. To our knowledge, there are no general characterizations of the non-uniqueness in the non-self-adjoint setting of the anisotropic Calder\'on problem. Instead we determine the outer shape $D^\bullet$ (Definition~\ref{def:outershape}) of inclusions $D = \suppm(A_D-A_0)$, from knowledge of a ``background conductivity'' coefficient $A_0$ and the ND map $\Lambda(A_D)$ for an unknown coefficient $A_D$. A reconstruction method was recently proved in \cite{GJZ2025} for very general $L^\infty$ \emph{self-adjoint} coefficients based on the monotonicity method. The main assumptions are a unique continuation principle (UCP) satisfied by $A_0$, and that $A_D-A_0$ satisfies a definiteness condition near $\partial D^\bullet$.
	
	We now consider the more complicated case of non-self-adjoint inclusions. Namely we write a general conductivity coefficient as
	\begin{equation*}
		A = \AR + \I \AI,
	\end{equation*}   
	where $\AR$ is the \emph{self-adjoint} part of $A$ while $\I\AI$ is the \emph{skew-adjoint} part of $A$. In particular, both $\AR$ and $\AI$ are self-adjoint as matrices. Suppose that both $\AR$ and $\AI$ have real-valued matrix-entries, then this corresponds to a mathematical model with
	\begin{equation*}
		\AR = \sigma_\omega \quad \text{and} \quad \AI = \omega\varepsilon_\omega,
	\end{equation*}
	where $\sigma_\omega$ is the anisotropic \emph{real} conductivity and $\varepsilon_\omega$ is the anisotropic permittivity, both in general depending on the frequency $\omega$. In this setting, $A$ would be the the anisotropic \emph{complex} conductivity (also called an anisotropic admittivity).
	
	We give a number of generalizations of results from \cite{GJZ2025} to the non-self-adjoint setting, using monotonicity inequalities for the self-adjoint parts of the ND maps (Theorem~\ref{thm:generalmono}), but with some limitations on the skew-adjoint part of the background conductivity $A_0$. Theorems~\ref{thm:mainnonlinear} and~\ref{thm:mainlinear} give exact reconstruction methods of $D^\bullet$ provided that: 
	\begin{itemize}
		\item a definiteness condition is satisfied near $\partial D^\bullet$ for the self-adjoint parts $\AR_D-\AR_0$, with a sufficiently small skew-adjoint part near $\partial D^\bullet$,
		\item that $\AR_0$ satisfies the UCP (see e.g.~\cite[definition 3.2]{GJZ2025}),
		\item and that all of $\partial D^\bullet$ can be reached from the measurement boundary $\Gamma$ via an open connected set on which $A_0$ is self-adjoint.
	\end{itemize}   
	In particular, inside $D$ (away from $\partial D^\bullet$) we allow $A_D - A_0$ to be a general $L^\infty$ non-self-adjoint perturbation. Moreover, no explicit regularity is required from $\partial D^\bullet$. Theorem~\ref{thm:mainlinear} is particularly useful for numerical implementation, since the test-operators are linearized and therefore can be implemented as a real-time reconstruction method.
	
	The requirement that $A_0$ is self-adjoint on a set that connects all of $\partial D^\bullet$ to $\Gamma$ is related to the non-optimality of the monotonicity inequalities in Theorem~\ref{thm:generalmono}. There are terms such as $\AI_j(\AR_1)^{-1}\AI_j$ for $j\in\{1,2\}$ that do not vanish even if $A_1=A_2$, meaning that the lower and upper bounds in those inequalities are non-zero in the non-self-adjoint case even when the coefficients agree. This leads to the need of some trickery, including more complicated test-operators, in order to make the \emph{outer approach} of the monotonicity method work, and it seems to prevent very general use of the \emph{inner approach} of the monotonicity method in this setting (see \cite{GJZ2025} for the differences between the inner and outer approach for the self-adjoint setting). 
	
	It is worth noting that we only require a UCP result for $\AR_0$ and not $A_0$ itself, which is very useful since most UCP results are proven for self-adjoint coefficients. In particular, for spatial dimension $d=2$, for any background conductivity $A_0$ for which $\AR_0$ is real-valued, then $\AR_0$ satisfies the UCP \cite{Alessandrini2012}, while for spatial dimension $d\geq 3$, a sufficient condition is to also require that $\AR_0$ is Lipschitz regular \cite{Garofalo1986,Mandache1998,Miller74}. 
	
	In the case when $A_0$ is self-adjoint and it is only the perturbation $A_D-A_0$ that is non-self-adjoint, the assumptions and test-operators simplify, and the corresponding results are given in Corollary~\ref{coro}. In \cite[theorem 12.2]{GJZ2025} a general result was given using \emph{extreme} test-operators, based on perfectly conducting and perfectly insulating parts of a coefficient. This method does not require bounds on the magnitude of the unknown perturbation. In Theorem~\ref{thm:mainextreme} we show that the exact same extreme test-operators that work for very general self-adjoint perturbations also work for very general non-self-adjoint perturbations, where we again assume that $A_0$ is self-adjoint. The proof becomes very short by rewriting the problem in a way that allows utilizing \cite[theorem~12.2]{GJZ2025} based on self-adjoint coefficients.
	
	In Appendix~\ref{appA} we prove better monotonicity inequalities, in the sense that the bounds become zero when $A_1 = A_2$ or if the skew-adjoint parts vanish. Monotonicity-type reconstruction methods rely on localized potentials (Theorem~\ref{thm:locpot}), to control the monotonicity inequalities using e.g.\ just the electric potential $u_2$ related to coefficient $A_2$. We show that the extra terms originating from the skew-adjoint parts can be bounded solely based on $u_2$. However, these bounds are still not strong enough to avoid the extra assumptions on the background conductivity in the monotonicity method. Hence, we leave it for the inverse problems community to improve those bounds, in order to allow more general non-self-adjoint background conductivities and to enable the inner approach of the monotonicity method in this setting.
	
	\subsection{Relation to previous results}
	
	Only few reconstruction methods of inclusions have been developed for the anisotropic Calder\'on problem, with \cite{GJZ2025} being a main recent result and a factorization method is given in \cite{Kirsch2005}. In the real isotropic (scalar-valued) setting, we will just mention a few key results \cite{GardeVogelius2024,Garde2020,Harrach13,Harrach10,Tamburrino2002} related to the monotonicity method; see references within \cite{GJZ2025,Harrach19} for additional results.
	
	In the complex isotropic setting, there is also a different type of factorization method in~\cite{Harrach09} for inclusion detection. The factorization method does not allow general perturbations, but require the perturbation to be bounded away from zero in a definite way (either positive or negative) throughout the entire inclusion. There is also a monotonicity method related to ultrasound modulated electrical impedance tomography \cite{Harrach2015}. Both results from~\cite{Harrach09,Harrach2015} rely on a clever way of weighting the ND maps with admittivity coefficients. We note that such a weighting also introduces non-uniqueness in the isotropic problem (some perturbations become invisible to measurements), and we note that this approach does not appear to generalize to the anisotropic setting.
	
	\subsection{Organization of the article}
	
	Section~\ref{sec:forward} focuses on the forward conductivity problem and establishes notation related to function spaces and the sets of coefficients. The nonlinear reconstruction method (Theorem~\ref{thm:mainnonlinear}) and the linearized reconstruction method (Theorem~\ref{thm:mainlinear}) are given in Sections~\ref{sec:recnonlin} and~\ref{sec:rec}, and with their simplified versions in case $A_0$ is self-adjoint (Corollary~\ref{coro}) given in Section~\ref{sec:SA}. The reconstruction method using extreme test-operators (Theorem~\ref{thm:mainextreme}) is given in Section~\ref{sec:recextreme}. Section~\ref{sec:tools} collects a number of tools from \cite{GJZ2025} used for proving the main results in Sections~\ref{sec:proofmainnonlinear}--\ref{sec:proofmainextreme}. Appendix~\ref{appA} considers progress towards improved monotonicity inequalities.
	
	\subsection{Remarks on notation} \label{sec:notation}
	
	All considered vector spaces are complex; this includes the classes of conductivity coefficients. We use the convention that inner products on complex Hilbert spaces are linear in the first entry and anti-linear in the second. We denote the Euclidean inner product as $z_1\cdot\overline{z_2}$ for $z_1,z_2\in \C^d$, in particular the ``dot'' is bilinear. The Euclidean norm is denoted~$\abs{\,\cdot\,}$.
	
	For self-adjoint operators $A,B\in\mathscr{L}(H)$ for a Hilbert space $H$, we write $A\geq B$ when $A-B$ is positive semidefinite; this is the Loewner ordering of such operators. Moreover, for $A$ being a positive definite operator, $A^{1/2}$ denotes its unique positive definite square root.
	
	\section{The forward problem} \label{sec:forward}
	
	This section will be close to \cite[section 2]{GJZ2025}, and the claims given in this section are verified there. Let $\Omega$ be a bounded Lipschitz domain in $\R^d$, for $d\in\N\setminus\{1\}$, with connected complement. For general bounded non-self-adjoint matrix-valued functions $A\in L^\infty(\Omega)^{d\times d}$, we equip the space with the norm
	\begin{equation*}
		\norm{A}_{\mathcal{H}(\Omega)} = \sup_{x\in\Omega}\norm{A(x)}_2,
	\end{equation*}
	for which $\norm{\,\cdot\,}_2$ is the Euclidean operator norm (spectral norm for matrices). Note that in \cite{GJZ2025} we denoted this norm as $\norm{\,\cdot\,}_*$; we changed the notation to $\norm{\,\cdot\,}_{\mathcal{H}(\Omega)}$ as we will also need to consider the norm of a coefficient restricted to a subset $V\subset\Omega$, which becomes $\norm{\,\cdot\,}_{\mathcal{H}(V)}$.
	
	We consistently decompose $A$ into a \emph{self-adjoint part} $\AR$ and a \emph{skew-adjoint part} $\I\AI$, where $\AR$ and $\AI$ are self-adjoint as matrices:
	\begin{equation*}
		A = \AR + \I\AI
	\end{equation*}
	with
	\begin{equation*}
		\AR = \frac{1}{2}\bigl(A + A^*\bigr) \quad \text{and} \quad \AI = \frac{1}{2\I}\bigl(A - A^*\bigr).
	\end{equation*}
	In this notation, we may also refer to $\AR$ as the \emph{real part} and $\AI$ as the \emph{imaginary part}. Note that this does not relate to the actual matrix-entries, as these can be complex in both $\AR$ and $\AI$, but rather it relates to the associated quadratic forms, which for $\xi\in\C^d$ satisfy:
	\begin{equation*}
		\redel \bigl(A\xi\cdot\overline{\xi}\bigr) = \AR\xi\cdot\overline{\xi} \quad \text{and} \quad \imdel \bigl(A\xi\cdot\overline{\xi}\bigr) = \AI\xi\cdot\overline{\xi}.
	\end{equation*}
	
	The general class of conductivity coefficients we will consider are given by
	\begin{equation*}
		\mathcal{H}(\Omega) = \{\, A\in L^\infty(\Omega)^{d\times d} \mid \exists c>0 \colon \AR \geq c I \text{ in the Loewner order in $\Omega$} \,\}.
	\end{equation*}
	In case the conductivity coefficient is self-adjoint, it belongs to
	\begin{equation*}
		\Hsa(\Omega) = \{\, A\in\mathcal{H}(\Omega) \mid \AI = 0 \,\}.
	\end{equation*}
	
	Consider the partial data anisotropic conductivity problem for $A\in\mathcal{H}(\Omega)$: 
	\begin{equation} \label{eq:condeq}
		-\nabla\cdot(A\nabla u) = 0 \text{ in } \Omega, \qquad 	\nu\cdot(A\nabla u) = \begin{dcases}
			f & \text{on } \Gamma, \\
			0 & \text{on } \partial\Omega\setminus\Gamma.
		\end{dcases}
	\end{equation}
	Here $\nu$ is the outer unit normal to $\Omega$, $\Gamma\subseteq \partial \Omega$ is a non-empty open boundary piece, and the mean-free current density $f$ belongs to
	\begin{equation*}
		L^2_\diamond(\Gamma) = \{\, f\in L^2(\Gamma) \mid \inner{f,1} = 0 \,\}.
	\end{equation*}
	We denote by $\inner{\,\cdot\,,\,\cdot\,}$ the standard inner product on $L^2(\Gamma)$, and $\norm{\,\cdot\,}$ is the associated norm. We likewise denote
	\begin{equation*}
		H^1_\diamond(\Omega) = \{\, u\in H^1(\Omega) \mid \inner{u|_\Gamma,1} = 0 \,\}.
	\end{equation*}
	From the Lax--Milgram lemma there exists a unique weak solution $u = u_f^A \in H^1_\diamond(\Omega)$ to \eqref{eq:condeq}. We will occasionally use the notation $u_f^A$ to indicate which coefficient $A$ and Neumann condition $f$ that is considered.
	
	We let $\Lambda(A) \in \mathscr{L}(L^2_\diamond(\Gamma))$ denote the associated local ND map,
	\begin{equation*}
		\Lambda(A)f = u_f^A|_{\Gamma},
	\end{equation*}
	and the forward problem is the nonlinear mapping $\Lambda \colon \mathcal{H}(\Omega) \to \mathscr{L}(L^2_\diamond(\Gamma))$. It holds in general that 
	\begin{equation}
		\Lambda(A^*) = \Lambda(A)^*, \label{eq:NDadjoint}
	\end{equation}
	and for any $A_1,A_2\in \mathcal{H}(\Omega)$ and $f,g\in L^2_\diamond(\Gamma)$, we have
	\begin{equation} \label{eq:lambdaweak}
		\inner{f,\Lambda(A_1)g} = \int_\Omega A_2\nabla u_f^{A_2}\cdot \overline{\nabla u_g^{A_1}}\,\di x.
	\end{equation}
	We also write
	\begin{equation*}
		\LambdaR(A) = \frac{1}{2}\bigl(\Lambda(A) + \Lambda(A)^*\bigr) \quad \text{and} \quad \LambdaI(A) = \frac{1}{2\I}\bigl(\Lambda(A) - \Lambda(A)^*\bigr),
	\end{equation*}
	where $\LambdaR(A)$ and $\LambdaI(A)$ are self-adjoint operators. In terms of the quadratic forms we thus have
	\begin{align}
		\inner{f,\LambdaR(A)f} &= \redel\inner{f,\Lambda(A)f} = \int_\Omega \AR\nabla u_f^A\cdot\overline{\nabla u_f^A} \,\di x = \int_\Omega \abs{(\AR)^{1/2}\nabla u_f^A}^2\,\di x, \label{eq:lambdaR}\\
		\inner{f,\LambdaI(A)f} &= -\imdel\inner{f,\Lambda(A)f} = -\int_\Omega \AI\nabla u_f^A\cdot\overline{\nabla u_f^A} \,\di x. \label{eq:lambdaI}
	\end{align}
	
	\section{Nonlinear reconstruction of non-self-adjoint inclusions} \label{sec:recnonlin}
	
	We will assume that a ``background conductivity'' $A_0\in\mathcal{H}(\Omega)$ is known. For an unknown $A_D\in \mathcal{H}(\Omega)$, we call 
	\begin{equation*}
		D = \suppm(A_D - A_0)
	\end{equation*}
	the \emph{inclusions}, noting that $D$ may have several connected components. Our reconstruction method relates to the so-called outer shape of $D$.
	\begin{definition} \label{def:outershape}
		The \emph{outer shape} $D^\bullet$ of $D$ is the smallest closed set with connected complement such that~$D\subseteq D^\bullet$. 
	\end{definition}
	We will denote 
	\begin{equation*}
		M = \suppm(\AI_0).
	\end{equation*}
	We say that $C\in\mathcal{A}$ for the class of admissible test-inclusions $\mathcal{A}$, provided that
	\begin{enumerate}[(i)]
		\item $C\Subset \Omega$ is the closure of an open set and has connected complement.
		\item $\partial C\cap M = \emptyset$.
	\end{enumerate}
	Since $M$ is known, this will inform on how to pick the test-inclusions. Note also that certain connected components of $M$ can be contained inside $C$. Moreover, the test-inclusions can consist of several connected components.
	
		\begin{assumption}  \label{assump:recon} \needspace{2\baselineskip} {}\
		\begin{enumerate}[\rm(i)]
			\item Assume there are \emph{known} bounds on $A_D$, i.e.\ scalars $0 < \alpha \leq \beta$ and $\eta\geq 0$ such that
			\begin{equation*}
				\alpha I \leq \AR_D \leq \beta I \quad \text{and} \quad \alpha I \leq \AR_0 \leq \beta I \quad
			\end{equation*}
			in the Loewner order in $\Omega$, and
			\begin{equation*}
				\max\{\norm{\AI_D}_{\mathcal{H}(\Omega)}, \norm{\AI_0}_{\mathcal{H}(\Omega)}\} \leq \eta.
			\end{equation*}
			\item Assume that $D\Subset \Omega$ and is the closure of an open set.
			\item There exists a connected component $S$ of $\Omega\setminus(D\cup M)$, such that $\partial D^\bullet \subset\partial S$ and such that $\partial S$ contains a non-empty relatively open subset of $\Gamma$.
			\item For every $x\in\partial D^\bullet$ and every open neighbourhood $W$ of $x$, assume there exists a relatively open connected set $V\subset D^\bullet\cap W$ that intersects $\partial D^\bullet$, satisfying either of two options:
			\begin{enumerate}[\rm(a)]
				\item $\AR_D - \AR_0 \geq \tau^+ I$ in $V$, and there exist an open ball $B\subset V$ and $c>0$ such that $\AR_D - \AR_0 \geq (\tau^+ + c)I$ in $B$, where
				\begin{equation*}
					\tau^+ \geq \tfrac{\beta^2}{\alpha^3}\norm{\AI_0}_{\mathcal{H}(V)}^2.
				\end{equation*}
				\item $\AR_D - \AR_0 \leq -\tau^- I$ in $V$, and there exist an open ball $B\subset V$ and $c>0$ such that $\AR_D - \AR_0 \leq -(\tau^- + c)I$ in $B$, where
				\begin{equation*}
					\tau^- \geq \tfrac{1}{\alpha}\norm{\AI_D}_{\mathcal{H}(V)}^2.
				\end{equation*}
			\end{enumerate}
			\item Assume that $\AR_0$ satisfies the UCP.
		\end{enumerate}
	\end{assumption}
	
	\begin{remark}
		In the isotropic case, $\beta^2/\alpha^3$ can be replaced by $\beta/\alpha^2$ in part (a) of Assumption~\ref{assump:recon}(iv). This is due to Remark~\ref{remark:isotropicbnd} and the use of Proposition~\ref{prop:bnd} in the proofs of Theorems~\ref{thm:mainnonlinear} and~\ref{thm:mainlinear}.
	\end{remark}
	
	In Assumption~\ref{assump:recon}, part~(iii) ensures that we can use localization away from the support of $\AI_0$, up to (but not including) an interior neighbourhood (inside $D$) of $\partial D^\bullet$, which will be crucial. 
	
	For a measurable set $C$, we define the self-adjoint test-coefficients
	\begin{equation*}
		A^-_{C} = \begin{dcases}
			\alpha I & \text{in } C \\
			\AR_0 & \text{in } \Omega\setminus C
		\end{dcases}
		\quad 
		\text{and}
		\quad
		A^+_{C} = \begin{dcases}
			(\beta+\tfrac{\eta^2}{\alpha}) I & \text{in } C \\
			\AR_0 & \text{in } \Omega\setminus C,
		\end{dcases}
	\end{equation*}
	and the corresponding test-operators
	\begin{equation*}
		\Lambda^-_C = \Lambda(A^-_C) \quad \text{and}\quad \Lambda^+_C = \Lambda(A^+_C).
	\end{equation*}
	We also define
	\begin{equation*}
		\inner{f, \DLambda_{M\setminus C}^+ f} = -\int_{M\setminus C} \AI_0(\AR_0)^{-1}\AI_0\nabla u_f^{A_C^+}\cdot\overline{\nabla u_f^{A_C^+}}\,\di x,
	\end{equation*}
	which is the Fr\'echet derivative of $\Lambda$ at $A_C^+$ and in the direction $\AI_0(\AR_0)^{-1}\AI_0\chi_{M\setminus C}$, where $\chi_{M\setminus C}$ is a characteristic function on the set $M\setminus C$, cf.~\cite{Garde2022c}.
	
	\begin{theorem} \label{thm:mainnonlinear}
		Under Assumption~\ref{assump:recon}(i), for any measurable $C\subseteq \overline{\Omega}$ we have
		\begin{equation*}
			D \subseteq C \quad \text{implies} \quad \Lambda^-_{C} \geq \LambdaR(A_D) \geq \Lambda^+_{C}+\DLambda_{M\setminus C}^+.
		\end{equation*}
		Under all of Assumption~\ref{assump:recon}, for any $C\in\mathcal{A}$ we have
		\begin{equation*}
			\Lambda^-_{C} \geq \LambdaR(A_D) \geq \Lambda^+_{C}+\DLambda_{M\setminus C}^+ \quad \text{implies} \quad D \subseteq C.
		\end{equation*}
	\end{theorem}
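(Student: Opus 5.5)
We prove the two implications separately, both resting on the monotonicity inequalities for the self-adjoint parts of the ND maps (Theorem~\ref{thm:generalmono}). In the form we use them, for $A_1,A_2\in\mathcal{H}(\Omega)$, $f\in L^2_\diamond(\Gamma)$ and $u_j=u_f^{A_j}$ they read, up to the paper's exact formulation,
\begin{equation*}
\int_\Omega \bigl(\AR_2-\AR_1 - \AI_1(\AR_1)^{-1}\AI_1\bigr)\nabla u_2\cdot\overline{\nabla u_2}\,\di x \;\leq\; \inner{f,(\LambdaR(A_1)-\LambdaR(A_2))f} \;\leq\; \int_\Omega \bigl(\AR_2-\AR_1 + \AI_2(\AR_2)^{-1}\AI_2\bigr)\nabla u_2\cdot\overline{\nabla u_2}\,\di x ,
\end{equation*}
together with the variant obtained by interchanging the roles of $A_1$ and $A_2$.

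\textbf{First implication.} Assume $D\subseteq C$. For $\Lambda^-_C\geq\LambdaR(A_D)$, apply the lower bound with $A_1=A_D$ and $A_2=A^-_C$, which is self-adjoint.
\begin{itemize}
\item In $C$ the integrand contains $\alpha I-\AR_D-\AI_D(\AR_D)^{-1}\AI_D$. This is not of a good sign, so we instead use the interchanged form with $A_1=A^-_C$. Then $\AI_1=0$, and the weight becomes $\AR_D-\alpha I+\AI_D(\AR_D)^{-1}\AI_D\geq 0$ in $C$.
\item Outside $C$ we have $A_D=A_0$, so the weight reduces to $\AI_0(\AR_0)^{-1}\AI_0\geq0$.
\end{itemize}
Hence the difference is nonnegative.

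For the other inequality, compare $A_1=A_D$ with $A_2=A^+_C$, which is self-adjoint, using the upper bound tested with $u=u_f^{A^+_C}$.
\begin{itemize}
\item In $C$ the weight is $(\beta+\eta^2/\alpha)I-\AR_D$ plus a term $-\AI_D(\AR_D)^{-1}\AI_D\geq -\tfrac{\eta^2}{\alpha}I$, so it is nonnegative.
\item In $\Omega\setminus C$ the weight is $-\AI_0(\AR_0)^{-1}\AI_0$, supported in $M\setminus C$. This is exactly $\DLambda^+_{M\setminus C}$.
\end{itemize}
This gives $\LambdaR(A_D)\geq\Lambda^+_C+\DLambda^+_{M\setminus C}$. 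Signs must be tracked against Theorem~\ref{thm:generalmono}; the design of $A^\pm_C$ is clearly tailored to make these cancellations work.

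\textbf{Second implication.} We argue by contradiction. Let $C\in\mathcal{A}$, suppose both inequalities hold, and suppose $D\not\subseteq C$. Since $C$ is closed with connected complement, $D^\bullet\not\subseteq C$, so some point $x\in\partial D^\bullet$ lies outside $C$.
\begin{enumerate}[(1)]
\item Choose $W\ni x$ disjoint from $C$ and, if possible, from $M$. By Assumption~\ref{assump:recon}(iv) pick $V\subset D^\bullet\cap W$ and a ball $B\subset V$.
\item By Assumption~\ref{assump:recon}(iii) the set $S\setminus C$ connects $\Gamma$ to $V$ while avoiding $M$ and $D$. The condition $\partial C\cap M=\emptyset$ from $\mathcal{A}$ is used here to ensure that $S\setminus C$ still reaches $\partial D^\bullet$ near $x$.
\item Use localized potentials (Theorem~\ref{thm:locpot}), with UCP only for $\AR_0$ (Assumption~\ref{assump:recon}(v)). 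This applies because on the relevant sets the coefficients involved, $A^\pm_C$ and $A_D=A_0$ on $S$, coincide with the self-adjoint $\AR_0$. We obtain $f_k$ with $\int_B\abs{\nabla u_k}^2\to\infty$ and $\int_{(C\cup M\cup D)\setminus V}\abs{\nabla u_k}^2\to0$, where $u_k$ is the solution for $A^-_C$ in case (a) and for $A^+_C$ in case (b).
\end{enumerate}

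We then contradict the appropriate inequality.
\begin{itemize}
\item \emph{Case (a).} Write the upper bound for $\inner{f,(\LambdaR(A_D)-\Lambda^-_C)f}$ and test with $u_2=u_f^{A^-_C}$. The integrand over $V$ is $\AR_D-\AR_0$ minus a skew term bounded by $\tfrac{\beta^2}{\alpha^3}\norm{\AI_0}^2_{\mathcal{H}(V)}$. Converting between the potentials for $A_D$ and $A^-_C$ via Proposition~\ref{prop:bnd} costs factors of $\beta/\alpha$, which produces the threshold $\tau^+$. The excess $c$ on $B$ makes the quantity $\inner{f,(\LambdaR(A_D)-\Lambda^-_C)f}$ tend to $+\infty$, contradicting $\Lambda^-_C\geq\LambdaR(A_D)$.
\item \emph{Case (b).} Test $\LambdaR(A_D)-\Lambda^+_C-\DLambda^+_{M\setminus C}$ with $u_f^{A^+_C}$. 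On $V$ the weight is at most $\AR_D-\AR_0+\AI_D(\AR_D)^{-1}\AI_D\leq -\tau^-+\tfrac1\alpha\norm{\AI_D}^2_{\mathcal{H}(V)}\leq 0$, and it is strictly negative on $B$. The contributions from the remaining regions vanish in the limit.
\end{itemize}

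The main obstacle is step (3) combined with the bookkeeping of the remainder terms. Localized potentials must avoid $M$ and $C$, yet still concentrate in $V$. Moreover, the contributions inside $D\setminus V$, where $A_D$ is arbitrary and non-self-adjoint, must be controlled solely through the potential of the self-adjoint test coefficient, using the energy comparison of Proposition~\ref{prop:bnd}.
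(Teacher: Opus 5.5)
\textbf{The second implication pairs the definiteness cases with the wrong inequalities, and as assigned neither case can succeed.} You contradict $\Lambda^-_C\geq\LambdaR(A_D)$ in case (a) and $\LambdaR(A_D)\geq\Lambda^+_C+\DLambda^+_{M\setminus C}$ in case (b), using localized potentials for $A^-_C$ resp.\ $A^+_C$. A positive perturbation never violates the $\Lambda^-_C$ inequality. For example, if $A_0,A_D$ are self-adjoint with $A_D\geq A_0$ (so case (a) holds with $\tau^+=0$), then $A^-_C\leq A_0\leq A_D$, and monotonicity gives $\Lambda^-_C\geq\Lambda(A_D)$ for \emph{every} $C$. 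Symmetrically, $A_D\leq A_0\leq A^+_C$ gives $\Lambda(A_D)\geq\Lambda^+_C$ for every $C$ in case (b). Concretely, in your case (a) the upper bound of Theorem~\ref{thm:generalmono} for $\LambdaR(A_D)-\Lambda^-_C$, tested with $u_f^{A^-_C}$, has weight $\AR_0(\AR_D)^{-1}(\AR_0-\AR_D)\leq-(\alpha/\beta)^2\tau^+I$ on $V$. So it tends to $-\infty$, not $+\infty$; in any case, an upper bound can never show divergence to $+\infty$.

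The correct pairing is the opposite. In case (a), take potentials $u_j$ for $A^+_C$ and the upper bound with $A_1=A_D$, $A_2=A^+_C$. After adding $-\DLambda^+_{M\setminus C}$, the weight on $U\cap D$ is $\AR_0(\AR_D)^{-1}(\AR_0-\AR_D)+\AI_0(\AR_0)^{-1}\AI_0\leq-(\alpha/\beta)^2\tau^+I+\alpha^{-1}\norm{\AI_0}^2_{\mathcal{H}(V)}I\leq0$ by Proposition~\ref{prop:bnd}(ii), and it is $\leq -c(\alpha/\beta)^2I$ on $B$. This is where $\tau^+\geq\frac{\beta^2}{\alpha^3}\norm{\AI_0}^2_{\mathcal{H}(V)}$ comes from. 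The $\AI_0$ term enters only through $\DLambda^+_{M\setminus C}$, which is why $\tau^+$ belongs with $\Lambda^+_C$. In case (b), take potentials for $A^-_C$ and negate the lower bound with $A_1=A_D$, $A_2=A^-_C$:
\begin{equation*}
\inner{f,(\Lambda^-_C-\LambdaR(A_D))f}\leq\int_\Omega\bigl[\AR_D-A^-_C+\AI_D(\AR_D)^{-1}\AI_D\bigr]\nabla u_j\cdot\overline{\nabla u_j}\,\di x .
\end{equation*}
Its weight on $U\cap D$ is exactly the one you wrote in your case (b), and it yields $\Lambda^-_C\not\geq\LambdaR(A_D)$.

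Several other points need fixing.
\begin{enumerate}[(1)]
\item The upper bound you quote (tested with $u_2$, weight $\AR_2-\AR_1+\AI_2(\AR_2)^{-1}\AI_2$) is valid only when tested with $u_1$; it is the swapped lower bound. Localized potentials exist only for the test coefficient, so you need the paper's $u_2$-version with $\AR_2(\AR_1)^{-1}(\AR_2-\AR_1)$. That is the only reason Proposition~\ref{prop:bnd} appears: it is a pointwise matrix inequality, not a conversion between potentials.
\item In the first implication, the lower bound with $A_1=A^-_C$, $A_2=A_D$ has weight $\AR_D-\alpha I\geq0$ in $C$ and $0$ outside, with no skew terms. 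The weights $\AR_D-\alpha I+\AI_D(\AR_D)^{-1}\AI_D$ and $\AI_0(\AR_0)^{-1}\AI_0$ that you wrote belong to an upper bound and prove nothing, though the conclusion survives with the correct weight.
\item Your computation for the $\Lambda^+_C$ inequality uses the lower bound, not the upper one.
\item No energy comparison is needed on $D\setminus V$. With $U\subset(D\cup S)\setminus C$ and $U\cap D=V$, everything in $\Omega\setminus U$ is killed by the localization alone (bounded weights times vanishing energy). On $U\setminus D\subset S$ all weights vanish, since $A_D=A_0$ and $\AI_0=0$ there.
\end{enumerate}
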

	\begin{proof}
		The proof is given in Section~\ref{sec:proofmainnonlinear}.
	\end{proof}
	
	\section{Linearized reconstruction of non-self-adjoint inclusions} \label{sec:rec}
	
	We now consider a linearized version of the monotonicity method. We define the following test-operators, via quadratic forms,
	\begin{align*}
		\inner{f,\DLambda_C^+ f} &= -\int_C \bigl[\bigl(\beta + \tfrac{\eta^2}{\alpha}\bigr)I-\AR_0\bigr]\nabla u_f^{\AR_0}\cdot\overline{\nabla u_f^{\AR_0}}\,\di x, \\
		\inner{f,\DLambda_C^- f} &= -\int_C \bigl[\AR_0 - \tfrac{\beta^2}{\alpha}I\bigr]\nabla u_f^{\AR_0}\cdot\overline{\nabla u_f^{\AR_0}}\,\di x, \\
		\inner{f, \DLambda_{M\setminus C} f} &= -\int_{M\setminus C} \AI_0(\AR_0)^{-1}\AI_0\nabla u_f^{\AR_0}\cdot\overline{\nabla u_f^{\AR_0}}\,\di x.
	\end{align*}
	The above test-operators are in fact Fr\'echet derivatives of $\Lambda$ at $\AR_0$ and in particular directions, cf.~\cite{Garde2022c}. Note below that we have $\Lambda(\AR_0) = \LambdaR(\AR_0)$ because of \eqref{eq:NDadjoint}.
	
	\begin{theorem} \label{thm:mainlinear}
		Under Assumption~\ref{assump:recon}(i), for any measurable $C\subseteq \overline{\Omega}$ we have
		\begin{equation*}
			D \subseteq C \quad \text{implies} \quad \DLambda^-_{C} \geq \LambdaR(A_D) - \Lambda(\AR_0) \geq \DLambda^+_{C}+\DLambda_{M\setminus C}.
		\end{equation*}
		Under all of Assumption~\ref{assump:recon}, for any $C\in\mathcal{A}$ we have
		\begin{equation*}
			 \DLambda^-_{C} \geq \LambdaR(A_D) - \Lambda(\AR_0) \geq \DLambda^+_{C}+\DLambda_{M\setminus C} \quad \text{implies} \quad D \subseteq C.
		\end{equation*}
	\end{theorem}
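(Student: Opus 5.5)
The proof has two parts: a monotonicity argument for the forward implication, and a localized-potentials contradiction argument for the converse. Both rest on the monotonicity inequalities for self-adjoint parts, Theorem~\ref{thm:generalmono}, applied with $A_1=\AR_0$, which is self-adjoint, and $A_2=A_D$. I expect these to take the form
\begin{equation*}
\int_\Omega \bigl[(\AR_1-\AR_2) - \AI_2(\AR_1)^{-1}\AI_2\bigr]\nabla u_1\cdot\overline{\nabla u_1}\,\di x \geq \inner{f,(\LambdaR(A_2)-\LambdaR(A_1))f} \geq \int_\Omega \bigl[(\AR_1-\AR_2) - \ldots\bigr]\nabla u_2\cdot\overline{\nabla u_2}\,\di x,
\end{equation*}
with $u_j=u_f^{A_j}$. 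The point of choosing $A_1=\AR_0$ is that the lower and upper bounds can then be expressed through $u_f^{\AR_0}$, after using $\Lambda(\AR_0)=\LambdaR(\AR_0)$.

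For the forward implication, assume $D\subseteq C$. On $\Omega\setminus C$ we have $A_D=A_0$, so $\AR_D-\AR_0=0$ there. The only remaining term off $C$ is the quadratic term $\AI_0(\AR_0)^{-1}\AI_0$, which lives on $M\setminus C$ and produces exactly $\DLambda_{M\setminus C}$. On $C$ I bound the integrand pointwise using Assumption~\ref{assump:recon}(i).
\begin{itemize}
\item For the lower bound, $\AR_0-\AR_D-\AI_D(\AR_0)^{-1}\AI_D \geq \AR_0-(\beta+\eta^2/\alpha)I$ gives $\DLambda^+_C$.
\item For the upper bound, I use the form of Theorem~\ref{thm:generalmono} in which the $u_2$-gradient appears. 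I convert it to a $\nabla u_f^{\AR_0}$-integral by the standard completing-the-square trick, $\AR_D\geq\alpha I$ together with $\AR_0\leq\beta I$. This yields a weight of the form $\AR_0-(\AR_0)^2/\alpha\ \geq\ \AR_0-\tfrac{\beta^2}{\alpha}I$, which matches $\DLambda^-_C$.
\end{itemize}
I expect this part to be a routine reorganization of Theorem~\ref{thm:generalmono}, possibly together with Proposition~\ref{prop:bnd}.

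For the converse, fix $C\in\mathcal{A}$, assume $D\not\subseteq C$, and aim to contradict one of the two inequalities. Since $C$ has connected complement, $D\not\subseteq C$ together with the definition of the outer shape lets me find $x\in\partial D^\bullet\setminus C$ and a neighbourhood $W$ of $x$ disjoint from $C$. Assumption~\ref{assump:recon}(iv) then supplies $V$ and the ball $B\subset V\subset W$. Assumption~\ref{assump:recon}(iii) gives the connected set $S$ that avoids $D\cup M$ and links $\Gamma$ to $\partial D^\bullet\ni x$. I apply the localized potentials result, Theorem~\ref{thm:locpot}, to the self-adjoint coefficient $\AR_0$, which is admissible by the UCP in Assumption~\ref{assump:recon}(v). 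This gives currents $f_k$ with
\begin{itemize}
\item $\int_B\abs{\nabla u_{f_k}^{\AR_0}}^2\to\infty$,
\item $\int_{C\cup(D^\bullet\setminus V)\cup (M\setminus V)}\abs{\nabla u_{f_k}^{\AR_0}}^2\to 0$.
\end{itemize}
Localization away from $M$ and into $V$ is possible because $\partial C\cap M=\emptyset$ and $S$ avoids $M$.

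The case split follows Assumption~\ref{assump:recon}(iv).
\begin{itemize}
\item In case (a) I contradict the upper bound $\DLambda^-_C\geq\LambdaR(A_D)-\Lambda(\AR_0)$. Evaluated at $f_k$, the left side tends to $0$ because $C$ carries vanishing energy. For the right side I use the lower monotonicity inequality with $A_1=\AR_0$ and the $\nabla u_f^{\AR_0}$-form. The positive contribution $(\AR_D-\AR_0)$ dominates the skew-adjoint error $\tfrac{\beta^2}{\alpha^3}\norm{\AI_0}^2_{\mathcal{H}(V)}$ on $V$, with a strict gain $c$ on $B$. Hence the right side tends to $+\infty$, a contradiction.
\item In case (b) I contradict the lower bound. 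The right side $\DLambda^+_C+\DLambda_{M\setminus C}$ stays bounded, since $C$ and $M\setminus V$ carry vanishing energy. The remaining concern is $M\cap V$. Here I would either show that $V$ can be shrunk to avoid $M$, or note that $\tau^-$ absorbs the $\AI_D$ term and that the $\AI_0$ contribution on $V$ is treated as in case (a). Meanwhile $\LambdaR(A_D)-\Lambda(\AR_0)\to-\infty$ because $\AR_D-\AR_0\leq-(\tau^-+c)$ on $B$ beats $\norm{\AI_D}^2/\alpha$.
\end{itemize}

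I expect the main obstacle to be the bookkeeping. Each monotonicity bound must be written in terms of $\nabla u_f^{\AR_0}$ only, with constants exactly matching the thresholds $\tau^\pm$. In particular, the $\beta^2/\alpha^3$ constant in case (a) should come from converting a $u_{A_D}$-bound to a $u_{\AR_0}$-bound via Proposition~\ref{prop:bnd}. I would try that conversion first.
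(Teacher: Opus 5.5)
Your forward implication is fine in substance. The converse, however, fails as written, because each case is attached to the wrong test inequality. The root cause is your displayed sandwich, which has both inequalities reversed: you are in effect treating $\Lambda$ as increasing in the coefficient, whereas the ND map is decreasing. Theorem~\ref{thm:generalmono}, applied with $A_D$ as the first and $\AR_0$ as the second coefficient, gives directly, with $u_0=u_f^{\AR_0}$,
\begin{equation*}
\int_\Omega \bigl[\AR_0-\AR_D-\AI_D(\AR_D)^{-1}\AI_D\bigr]\nabla u_0\cdot\overline{\nabla u_0}\,\di x \leq \inner{f,\bigl(\LambdaR(A_D)-\Lambda(\AR_0)\bigr)f} \leq \int_\Omega \AR_0(\AR_D)^{-1}(\AR_0-\AR_D)\nabla u_0\cdot\overline{\nabla u_0}\,\di x .
\end{equation*}
Your forward bullets actually use it this way round, and no completing-the-square conversion is needed.

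Now take case (a), where $\AR_D-\AR_0\geq\tau^+ I$ on $V$ and $\AR_D-\AR_0\geq(\tau^++c)I$ on $B$. The right-hand bound and Proposition~\ref{prop:bnd}(ii) give $\inner{f_k,(\LambdaR(A_D)-\Lambda(\AR_0))f_k}\leq-(\tfrac{\alpha}{\beta})^2(\tau^++c)\int_B\abs{\nabla u_k}^2\,\di x+o(1)\to-\infty$ for $u_k=u_{f_k}^{\AR_0}$. So this quantity never tends to $+\infty$, and the inequality $\DLambda^-_C\geq\LambdaR(A_D)-\Lambda(\AR_0)$ that you try to violate in fact holds along your sequence. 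This is also why $\norm{\AI_0}_{\mathcal{H}(V)}$, and hence $\tau^+$, has no role in that inequality. Symmetrically, in case (b) the left-hand bound gives $\inner{f_k,(\LambdaR(A_D)-\Lambda(\AR_0))f_k}\geq c\int_B\abs{\nabla u_k}^2\,\di x+o(1)\to+\infty$. So the lower test inequality is not violated either.

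The pairing has to be swapped, as in the paper.

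In case (a) you contradict $\LambdaR(A_D)-\Lambda(\AR_0)\geq\DLambda^+_C+\DLambda_{M\setminus C}$ using the right-hand bound. The term $-\DLambda^+_C$ only sees $C$, where the energy vanishes. The term $-\DLambda_{M\setminus C}$ adds $\int_{M\setminus C}\AI_0(\AR_0)^{-1}\AI_0\nabla u_k\cdot\overline{\nabla u_k}\,\di x$. This is $o(1)$ off $V$, since your potentials carry no energy on $M\setminus V$. On $V$ its weight is at most $\tfrac{1}{\alpha}\norm{\AI_0}_{\mathcal{H}(V)}^2\leq(\tfrac{\alpha}{\beta})^2\tau^+$. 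That is absorbed by $\AR_0(\AR_D)^{-1}(\AR_0-\AR_D)\leq-(\tfrac{\alpha}{\beta})^2\tau^+I$ from Proposition~\ref{prop:bnd}(ii), leaving $-c(\tfrac{\alpha}{\beta})^2\int_B\abs{\nabla u_k}^2\,\di x\to-\infty$. This absorption, not a $u_{A_D}$-to-$u_0$ conversion, is the origin of the constant $\beta^2/\alpha^3$.

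In case (b) you contradict $\DLambda^-_C\geq\LambdaR(A_D)-\Lambda(\AR_0)$ using the left-hand bound. Here $\tau^-\geq\tfrac{1}{\alpha}\norm{\AI_D}_{\mathcal{H}(V)}^2$ absorbs $\AI_D(\AR_D)^{-1}\AI_D$ on $V$, and $\inner{f_k,\DLambda^-_C f_k}\to 0$. No $\AI_0$-term arises on $V$ in this case, so there is no need to shrink $V$ away from $M$. That shrinking would be impossible in general anyway, since $M$ may meet $D$ right up to $\partial D^\bullet$.
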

	\begin{proof}
		The proof is given in Section~\ref{sec:proofmainlinear}.
	\end{proof}
	
	\begin{remark}
		Under the given assumptions, Theorem~\ref{thm:mainnonlinear} gives
		\begin{equation*}
			D^\bullet = \cap\, \{\, C\in\mathcal{A} \mid \Lambda^-_{C} \geq \LambdaR(A_D) \geq \Lambda^+_{C}+\DLambda_{M\setminus C}^+ \,\},
		\end{equation*}
		while Theorem~\ref{thm:mainlinear} gives
		\begin{equation*}
			D^\bullet = \cap\, \{\, C\in\mathcal{A} \mid \DLambda^-_{C} \geq \LambdaR(A_D) - \Lambda(\AR_0) \geq \DLambda^+_{C}+\DLambda_{M\setminus C} \,\}.
		\end{equation*}
		Moreover, from the proofs we also conclude:
		\begin{itemize}
			\item If in Assumption~\ref{assump:recon}(iv) we only arrive at case (a), we only need to check the inequalities $\LambdaR(A_D) \geq \Lambda^+_{C}+\DLambda_{M\setminus C}^+$ or $\LambdaR(A_D) - \Lambda(\AR_0) \geq \DLambda^+_{C}+\DLambda_{M\setminus C}$.
			\item If in Assumption~\ref{assump:recon}(iv) we only arrive at case (b), we only need to check the inequalities $\Lambda^-_{C} \geq \LambdaR(A_D)$ or $\DLambda^-_{C} \geq \LambdaR(A_D) - \Lambda(\AR_0)$.
		\end{itemize} 
	\end{remark}
	
	\section{When the background conductivity is self-adjoint} \label{sec:SA}
	
	We will here give the corresponding results to Theorems~\ref{thm:mainnonlinear} and~\ref{thm:mainlinear} in the simplified setting when $A_0$ is self-adjoint, that is $A_0\in\Hsa(\Omega)$. We will also assume that $\AI_D = 0$ in a neighbourhood of $\partial D^\bullet$ to further simplify the definiteness assumptions. In this case the class of admissible test-inclusions are
	\begin{equation*}
		\mathcal{A} = \{\, C \Subset \Omega \mid C \text{ is the closure of an open set and has connected complement} \,\}.
	\end{equation*}
	\begin{assumption}  \label{assump:recon2} \needspace{2\baselineskip} {}\
		\begin{enumerate}[\rm(i)]
			\item Assume there are \emph{known} bounds on $A_D$, i.e.\ scalars $0 < \alpha \leq \beta$ and $\eta\geq 0$ such that
			\begin{equation*}
				\alpha I \leq \AR_D \leq \beta I \quad \text{and} \quad \alpha I \leq \AR_0 \leq \beta I \quad
			\end{equation*}
			in the Loewner order in $\Omega$, and
			\begin{equation*}
				\norm{\AI_D}_{\mathcal{H}(\Omega)} \leq \eta.
			\end{equation*}
			\item Assume that $D\Subset \Omega$ and is the closure of an open set.
			\item Assume $\AI_D = 0$ in an open neighbourhood of $\partial D^\bullet$.
			\item For every $x\in\partial D^\bullet$ and every open neighbourhood $W$ of $x$, assume there exists a relatively open connected set $V\subset D^\bullet\cap W$ that intersects $\partial D^\bullet$, satisfying either of two options:
			\begin{enumerate}[\rm(a)]
				\item $\AR_D - A_0$ is positive semidefinite in $V$, and there exists an open ball $B\subset V$ on which $\AR_D - A_0$ is uniformly positive definite.
				\item $\AR_D - A_0$ is negative semidefinite in $V$, and there exists an open ball $B\subset V$ on which $\AR_D - A_0$ is uniformly negative definite.
			\end{enumerate}
			\item Assume that $A_0\in\Hsa(\Omega)$ satisfies the UCP.
		\end{enumerate}
	\end{assumption}	
		
	Under these assumptions, Theorem~\ref{thm:mainnonlinear} and Theorem~\ref{thm:mainlinear} reduce to the following results.
	
	\begin{corollary} \label{coro}
		Under Assumption~\ref{assump:recon2}(i), for any measurable $C\subseteq \overline{\Omega}$ we have
		\begin{equation*}
			D \subseteq C \quad \text{implies both} \quad \Lambda^-_{C} \geq \LambdaR(A_D) \geq \Lambda^+_{C} \quad \text{and} \quad \DLambda^-_{C} \geq \LambdaR(A_D) - \Lambda(A_0) \geq \DLambda^+_{C}.
		\end{equation*}
		Under all of Assumption~\ref{assump:recon2}, for any $C\in\mathcal{A}$ we have
		\begin{equation*}
			\text{both} \quad \Lambda^-_{C} \geq \LambdaR(A_D) \geq \Lambda^+_{C} \quad \text{and} \quad \DLambda^-_{C} \geq \LambdaR(A_D) - \Lambda(A_0) \geq \DLambda^+_{C} \quad \text{imply} \quad D \subseteq C.
		\end{equation*}
	\end{corollary}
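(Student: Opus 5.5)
The plan is to derive Corollary~\ref{coro} from Theorems~\ref{thm:mainnonlinear} and~\ref{thm:mainlinear} by checking that Assumption~\ref{assump:recon2} implies Assumption~\ref{assump:recon}, with the constants adapted, and that the test-operators collapse to the stated ones. Since $A_0\in\Hsa(\Omega)$, we have $\AI_0=0$, hence $M=\suppm(\AI_0)=\emptyset$. Consequently $\DLambda^+_{M\setminus C}=0$ and $\DLambda_{M\setminus C}=0$, and $\AR_0=A_0$, so $\Lambda(\AR_0)=\Lambda(A_0)$. Condition (ii) in the definition of $\mathcal{A}$ is vacuous, so the two admissible classes coincide. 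Assumption~\ref{assump:recon}(i) follows from Assumption~\ref{assump:recon2}(i), because $\norm{\AI_0}_{\mathcal{H}(\Omega)}=0\le\eta$. Part (ii) is identical, and part (v) follows since $\AR_0=A_0$.

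The first implication, $D\subseteq C$ implies both chains of inequalities, then follows directly from the first halves of the two theorems, which need only Assumption~\ref{assump:recon}(i).

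For the converse, I would verify Assumption~\ref{assump:recon}(iii) and (iv).

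\textbf{Part (iii).} Since $M=\emptyset$, $S$ must be the connected component of $\Omega\setminus D$ whose boundary contains all of $\partial D^\bullet$. I would take $S=\Omega\setminus D^\bullet$. It is connected because $D^\bullet$ has connected complement in $\R^d$ and $\Omega$ has connected complement; this is the standard argument used in \cite{GJZ2025}. It is a component of $\Omega\setminus D$, and it touches $\Gamma$ because $D^\bullet\Subset\Omega$. Its boundary contains $\partial D^\bullet$ by the definition of the outer shape.

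\textbf{Part (iv).} Fix $x\in\partial D^\bullet$ and a neighbourhood $W$. Shrink $W$ into the open neighbourhood from Assumption~\ref{assump:recon2}(iii), so that $\AI_D=0$ on $W$. Assumption~\ref{assump:recon2}(iv) then gives $V\subset D^\bullet\cap W$ and a ball $B$. On $V$ we have $\norm{\AI_0}_{\mathcal{H}(V)}=0$ and $\norm{\AI_D}_{\mathcal{H}(V)}=0$, so we may choose $\tau^\pm=0$. In case (a), $\AR_D-\AR_0\ge 0$ in $V$ and $\AR_D-\AR_0\ge cI$ in $B$ by uniform positive definiteness. Case (b) is symmetric. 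Hence Assumption~\ref{assump:recon}(iv) holds.

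The second halves of Theorems~\ref{thm:mainnonlinear} and~\ref{thm:mainlinear} then give each of the two implications. The corollary's hypothesis ``both'' is stronger than needed, since either chain alone suffices.

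I expect the main obstacle to be the topological check in part (iii): showing that $\Omega\setminus D^\bullet$ is a single connected component of $\Omega\setminus D$ whose boundary contains all of $\partial D^\bullet$ and a relatively open piece of $\Gamma$. For this I would rely on the outer-shape properties established in \cite{GJZ2025}. Everything else is direct substitution.
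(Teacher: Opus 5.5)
Your proposal is correct and takes the same route as the paper. The paper simply asserts that, with $A_0\in\Hsa(\Omega)$, Theorems~\ref{thm:mainnonlinear} and~\ref{thm:mainlinear} reduce to the corollary, and your checks fill in what it leaves implicit: $M=\emptyset$, the $\DLambda_{M\setminus C}$-terms vanish, $\Lambda(\AR_0)=\Lambda(A_0)$, $\tau^\pm=0$ after shrinking $W$ into the neighbourhood where $\AI_D=0$, and $S=\Omega\setminus D^\bullet$ for Assumption~\ref{assump:recon}(iii). Your justification for the connectedness of $\Omega\setminus D^\bullet$ is slightly loose but the claim is true: it follows because $\Omega$ and $\R^d\setminus D^\bullet$ are connected open sets covering $\R^d$, and the connected complement of $\Omega$ is what guarantees $D^\bullet\subset\Omega$.
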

	
	This resembles the results from \cite[Theorems 3.6 and 3.7]{GJZ2025}, with the exception that $\Lambda^+_C$ and $\DLambda^+_{C}$ take the bound on $\AI_D$ into account, and gives precisely \cite[Theorems 3.6 and 3.7]{GJZ2025} in case $A_D$ is self-adjoint.
	
	\section{Reconstruction using extreme test-operators} \label{sec:recextreme}
	
	We will now consider the use of \emph{extreme} test-operators for the monotonicity method with non-self-adjoint inclusions, in the style of \cite[theorem~12.2]{GJZ2025}. However, a notable difference is that we will not include extreme inclusions in $A_D$. Hence, we will assume that $A_D\in\mathcal{H}(\Omega)$, so the main novelty is that we will not require bounds on $\AR_D$ or $\AI_D$ (unlike in Corollary~\ref{coro}). Similar to Section~\ref{sec:SA}, we assume that $A_0\in \Hsa(\Omega)$.
	
	We define the class of admissible test-inclusions, which for the extreme test-operators also have Lipschitz regular boundary:
	\begin{align*}
		\widehat{\mathcal{A}} &= \{\, C \Subset \Omega \mid C \text{ is the closure of an open set,}  \\
		&\hphantom{{}= \{C \subset \Omega \mid{}\,}\text{has connected complement,} \\
		&\hphantom{{}= \{C \subset \Omega \mid{}\,}\text{and has Lipschitz boundary } \partial C \,\}.
	\end{align*}
	
	\begin{assumption}  \label{assump:reconextreme} \needspace{2\baselineskip} {}\
		\begin{enumerate}[\rm(i)]
			\item Assume that $D\Subset \Omega$ and is the closure of an open set, and that $\partial D^\bullet$ is Lipschitz regular.
			\item Assume $\AI_D = 0$ in an open neighbourhood of $\partial D^\bullet$.
			\item For every $x\in\partial D^\bullet$ and every open neighborhood $W$ of $x$, assume there exists a relatively open connected set $V\subset D^\bullet\cap W$ that intersects $\partial D^\bullet$, satisfying either of two options:
			\begin{enumerate}[\rm(a)]
				\item $\AR_D - A_0$ is positive semidefinite in $V$, and there exists an open ball $B\subset V$ on which $\AR_D - A_0$ is uniformly positive definite.
				\item $\AR_D - A_0$ is negative semidefinite in $V$, and there exists an open ball $B\subset V$ on which $\AR_D - A_0$ is uniformly negative definite.
			\end{enumerate}
			\item Assume that $A_0\in\Hsa(\Omega)$ satisfies the UCP.
		\end{enumerate}
	\end{assumption}
	
	We will use the shorthand notation $\Lambda_C^{\emptyset} = \Lambda(A_C^{\emptyset})$ and $\Lambda_{\emptyset}^C = \Lambda(A_{\emptyset}^C)$ for the extreme coefficients
	\begin{equation*}
		A_C^{\emptyset} = \begin{dcases}
			0 I & \text{in } C \\
			A_0 & \text{in } \Omega\setminus C
		\end{dcases}
		\quad 
		\text{and}
		\quad
		A_{\emptyset}^C = \begin{dcases}
			\infty I & \text{in } C \\
			A_0 & \text{in } \Omega\setminus C.
		\end{dcases}
	\end{equation*}
	Here $A_C^\emptyset$ corresponds to having a perfectly insulating part in $C$ and $A_\emptyset^C$ corresponds to having a perfectly conducting part in $C$. For the precise definition of the PDE problem associated with extreme coefficients, we refer to \cite[section 11]{GJZ2025}. This leads to the following result.
		
	\begin{theorem} \label{thm:mainextreme}
		For any $C\in\widehat{\mathcal{A}}$ we have
		\begin{equation*}
			D \subseteq C \quad \text{implies} \quad \Lambda_C^{\emptyset} \geq \LambdaR(A_D) \geq \Lambda_{\emptyset}^C.
		\end{equation*}
		Under Assumption~\ref{assump:reconextreme}, for any $C\in\widehat{\mathcal{A}}$ we have
		\begin{equation*}
			\Lambda_C^{\emptyset} \geq \LambdaR(A_D) \geq \Lambda_{\emptyset}^C \quad \text{implies} \quad D \subseteq C.
		\end{equation*}
	\end{theorem}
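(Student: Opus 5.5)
The plan is to reduce Theorem~\ref{thm:mainextreme} to the self-adjoint result \cite[theorem~12.2]{GJZ2025}. We replace $A_D$ by a self-adjoint coefficient $\widetilde A_D\in\Hsa(\Omega)$ with $\Lambda(\widetilde A_D)=\LambdaR(A_D)$, or at least one that is sandwiched suitably. The candidate is suggested by the variational characterization of $\LambdaR(A_D)$. First, by \eqref{eq:lambdaR}, $\inner{f,\LambdaR(A_D)f}=\int_\Omega \AR_D\nabla u\cdot\overline{\nabla u}\,\di x$ with $u=u_f^{A_D}$. Second, since $u$ weakly solves $-\nabla\cdot(A_D\nabla u)=0$, the real part of the energy functional satisfies a dual-type minimization identity. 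The standard computation (as in Theorem~\ref{thm:generalmono}/Appendix~\ref{appA}) gives the two-sided bounds
\begin{equation*}
\Lambda(\AR_D)\;\ge\;\LambdaR(A_D)\;\ge\;\Lambda\bigl(\AR_D+\AI_D(\AR_D)^{-1}\AI_D\bigr),
\end{equation*}
which follow from comparing with $\Lambda(\AR_D)$ and $\Lambda(\AR_D+\AI_D(\AR_D)^{-1}\AI_D)$ via the monotonicity inequalities of Theorem~\ref{thm:generalmono} with the self-adjoint choices $A_1=\AR_D$, $A_2=A_D$, and so on. Both outer coefficients are self-adjoint and lie in $\Hsa(\Omega)$. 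Since $A_0$ is self-adjoint, both differ from $A_0$ only inside $D$.

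For the first implication, assume $D\subseteq C$. Apply \cite[theorem~12.2]{GJZ2025} (its easy direction) to the self-adjoint coefficient $\AR_D$, whose support of $\AR_D-A_0$ lies in $D\subseteq C$, to get $\Lambda_C^\emptyset\ge\Lambda(\AR_D)$. Apply it likewise to $\AR_D+\AI_D(\AR_D)^{-1}\AI_D$ to get $\Lambda(\AR_D+\AI_D(\AR_D)^{-1}\AI_D)\ge\Lambda_\emptyset^C$. Chaining these with the sandwich above gives $\Lambda_C^\emptyset\ge\LambdaR(A_D)\ge\Lambda_\emptyset^C$. No bounds on $A_D$ are required, since the extreme coefficients dominate any bounded perturbation supported in $C$.

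For the converse, suppose $\Lambda_C^\emptyset\ge\LambdaR(A_D)\ge\Lambda_\emptyset^C$ but $D\not\subseteq C$. Because $C$ has connected complement, $D^\bullet\not\subseteq C$, so some point of $\partial D^\bullet$ lies outside $C$. Localizing there with Assumption~\ref{assump:reconextreme}(ii), we can choose $W$ so small that $\AI_D=0$ on $V$. Hence on $V$ both $\AR_D$ and $\AR_D+\AI_D(\AR_D)^{-1}\AI_D$ coincide with $A_D$. In case (a), the definiteness holds for the lower comparison coefficient $\widetilde A^+=\AR_D+\AI_D(\AR_D)^{-1}\AI_D$, and the proof of \cite[theorem~12.2]{GJZ2025} shows $\Lambda(\widetilde A^+)\not\ge\Lambda_\emptyset^C$. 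That proof uses localized potentials (Theorem~\ref{thm:locpot}) concentrating energy on $B$ while staying small on $C$. We need the stronger statement $\LambdaR(A_D)\not\ge\Lambda_\emptyset^C$, which follows from $\Lambda(\widetilde A^+)\ge$ ... — but the inequality direction is wrong for this.

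So instead we directly redo the localized-potential argument. Monotonicity gives $\inner{f,(\LambdaR(A_D)-\Lambda_\emptyset^C)f}\le\int_\Omega(\ldots)$, an energy integral of $u_f^{A_0}$-type potentials weighted by $-(\AR_D-A_0)$ and extreme terms on $C$. We then choose potentials that blow up on $B$ and vanish on $C$ and on the interior region of $D$ where $\AI_D\neq0$. Case (b) is symmetric with $\Lambda(\AR_D)$ and $\Lambda_C^\emptyset$.

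The main obstacle is this last step: obtaining a one-sided monotonicity inequality for $\LambdaR(A_D)$ against an extreme operator whose error terms involve only the potential of the self-adjoint $A_0$ (or $\AR_D$). It must also be localizable away from the region where $\AI_D\ne0$. I would first try the representation $\LambdaR(A_D)=\Lambda(\widehat A)$ for a Schur-complement-type self-adjoint coefficient $\widehat A$ defined piecewise, so that \cite[theorem~12.2]{GJZ2025} applies verbatim. Failing that, I would rely on the sandwich above together with the observation that the definiteness near $\partial D^\bullet$ is inherited by both comparison coefficients.
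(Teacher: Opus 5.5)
\textbf{The first implication is correct}, and its lower inequality takes a different route from the paper. Theorem~\ref{thm:generalmono} with $A_1=A_D$ and $A_2=\widetilde A:=\AR_D+\AI_D(\AR_D)^{-1}\AI_D$ does give $\LambdaR(A_D)\ge\Lambda(\widetilde A)$. Since $\widetilde A\in\Hsa(\Omega)$ equals $A_0$ outside $D$, \cite[theorem~12.2]{GJZ2025} then gives $\Lambda(\widetilde A)\ge\Lambda_\emptyset^C$. The paper instead uses the truncations $A_\epsilon$ and the operator-norm convergence $\Lambda(A_\epsilon)\to\Lambda_\emptyset^C$. Your route is shorter.

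\textbf{The converse is not proved.} The missing step is the one you flag as the main obstacle, and it fails because you paired each case with the wrong comparison coefficient and then discarded the sandwich. In case (a) you must show $\LambdaR(A_D)\not\ge\Lambda_\emptyset^C$, which needs an \emph{upper} bound on $\LambdaR(A_D)$, namely $\Lambda(\AR_D)$, not $\Lambda(\widetilde A)$. The perturbation $\AR_D-A_0$ inherits the definiteness of Assumption~\ref{assump:reconextreme}(iii), and its support still has outer shape $D^\bullet$, because the balls $B$ accumulate at every point of $\partial D^\bullet$. So case~2 of the proof of \cite[theorem~12.2]{GJZ2025} gives $f_0$ with $\inner{f_0,[\Lambda(\AR_D)-\Lambda_\emptyset^C]f_0}<0$. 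Then $\LambdaR(A_D)\le\Lambda(\AR_D)$ transfers this to $\LambdaR(A_D)$; this is exactly the paper's Case~1. In case (b) you need the \emph{lower} bound $\Lambda(\widetilde A)$, not $\Lambda(\AR_D)$ as you wrote. Indeed $\Lambda_C^\emptyset-\LambdaR(A_D)\le\Lambda_C^\emptyset-\Lambda(\widetilde A)$, and $\widetilde A=\AR_D$ near $\partial D^\bullet$ by Assumption~\ref{assump:reconextreme}(ii). Hence the self-adjoint theorem applied to $\widetilde A$ yields $\Lambda_C^\emptyset\not\ge\Lambda(\widetilde A)$, which completes your black-box reduction.

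The paper handles case (b) differently. It writes
\[
\Lambda_C^\emptyset-\LambdaR(A_D)=[\Lambda_C^\emptyset-\Lambda(A_0)]+[\Lambda(A_0)-\LambdaR(A_D)]
\]
and uses localized potentials $u_j$ for $A_0$ on a set $U$ where $\AI_D=0$. The second term is bounded by $\int[\AR_D-A_0+\AI_D(\AR_D)^{-1}\AI_D]\nabla u_j\cdot\overline{\nabla u_j}\,\di x$. The first term is shown to vanish in the limit via \cite[propositions~14.3 and~15.1]{GJZ2025}.

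As written, your localized-potential sketch does not say which inequality is used or why the extreme term vanishes. The proposed exact representation $\LambdaR(A_D)=\Lambda(\widehat A)$ is unsupported and not needed.
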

	\begin{proof}
		The proof is given in Section~\ref{sec:proofmainextreme}.
	\end{proof}
	
	\begin{remark}
		Under Assumptions~\ref{assump:reconextreme}, Theorem~\ref{thm:mainextreme} gives
		\begin{equation*}
			D^\bullet = \cap\, \{\, C\in\widehat{\mathcal{A}} \mid \Lambda_C^{\emptyset} \geq \LambdaR(A_D) \geq \Lambda_{\emptyset}^C \,\}.
		\end{equation*}
		Moreover, from the proof we also conclude:
		\begin{itemize}
			\item If in Assumption~\ref{assump:reconextreme}(iii) there is only positive definiteness near $\partial D^\bullet$, we only need to check the inequality $\LambdaR(A_D) \geq \Lambda_{\emptyset}^C$.
			\item If in Assumption~\ref{assump:reconextreme}(iii) there is only negative definiteness near $\partial D^\bullet$, we only need to check the inequality $\Lambda_C^{\emptyset} \geq \LambdaR(A_D)$.
		\end{itemize} 
	\end{remark}
	
	\section{Tools for the monotonicity method} \label{sec:tools}
	
	This section collects some of the main tools on monotonicity and localization from \cite{GJZ2025}, needed for proving the main results.
	
	Note that 
	\begin{equation*}
		-2[\AR_2(\AR_1)^{-1}\AI_2]\upI = \I\bigl[\AR_2(\AR_1)^{-1}\AI_2 - \AI_2(\AR_1)^{-1}\AR_2\bigr].
	\end{equation*}
	This leads to the following monotonicity inequalities.
	
	\begin{theorem}[Theorem 5.1 with $\kappa = 1$ in \cite{GJZ2025}] \label{thm:generalmono}
		Let $A_1,A_2\in\mathcal{H}(\Omega)$ and $f\in L^2_\diamond(\Gamma)$. For $j\in\{1,2\}$ we denote $\Lambda_j = \Lambda(A_j)$ and $u_j = u_f^{A_j}$. Then
		\begin{align*}
			\inner{f,(\LambdaR_1-\LambdaR_2) f} &\geq \int_\Omega \bigl[(\AR_2-\AR_1) - \AI_1(\AR_1)^{-1}\AI_1 \bigr]\nabla u_2\cdot \overline{\nabla u_2}\,\di x,  \\
			\inner{f,(\LambdaR_1-\LambdaR_2) f} &\leq \int_\Omega \bigl[ \AR_2(\AR_1)^{-1}(\AR_2-\AR_1) + \AI_2(\AR_1)^{-1}\AI_2 - 2[\AR_2(\AR_1)^{-1}\AI_2]\upI \bigr]\nabla u_2\cdot\overline{\nabla u_2}\,\di x.
		\end{align*}
	\end{theorem}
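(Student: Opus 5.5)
The plan is to prove both inequalities by the same completing-the-square argument. I combine several representations of $\inner{f,\Lambda_j f}$ coming from \eqref{eq:lambdaweak} and \eqref{eq:lambdaR}, and then add and subtract a Loewner-positive quadratic term weighted by $(\AR_1)^{-1}$. Throughout, write $B=(\AR_1)^{-1}$, which is a bounded, self-adjoint, positive definite matrix function since $A_1\in\mathcal{H}(\Omega)$.

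\emph{Upper bound.} By \eqref{eq:lambdaR}, $\inner{f,\LambdaR_1 f}=\int_\Omega \AR_1\nabla u_1\cdot\overline{\nabla u_1}\,\di x$. By \eqref{eq:lambdaweak} with $g=f$, $\inner{f,\Lambda_1 f}=\int_\Omega A_2\nabla u_2\cdot\overline{\nabla u_1}\,\di x$. Hence
\begin{equation*}
\inner{f,\LambdaR_1 f} = 2\redel\int_\Omega A_2\nabla u_2\cdot\overline{\nabla u_1}\,\di x-\int_\Omega \AR_1\nabla u_1\cdot\overline{\nabla u_1}\,\di x .
\end{equation*}
Setting $v=BA_2\nabla u_2$, this equals $\int_\Omega \AR_1 v\cdot\overline{v}\,\di x-\int_\Omega \AR_1(\nabla u_1-v)\cdot\overline{(\nabla u_1-v)}\,\di x$. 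The second integral is nonnegative, so
\begin{equation*}
\inner{f,\LambdaR_1 f}\leq\int_\Omega A_2^*BA_2\nabla u_2\cdot\overline{\nabla u_2}\,\di x .
\end{equation*}
Subtracting $\inner{f,\LambdaR_2 f}=\int_\Omega\AR_2\nabla u_2\cdot\overline{\nabla u_2}\,\di x$ leaves a purely algebraic step: expand
\begin{equation*}
A_2^*BA_2=(\AR_2-\I\AI_2)B(\AR_2+\I\AI_2)=\AR_2B\AR_2+\AI_2B\AI_2+\I(\AR_2B\AI_2-\AI_2B\AR_2),
\end{equation*}
and use the identity $-2[\AR_2B\AI_2]\upI=\I(\AR_2B\AI_2-\AI_2B\AR_2)$ noted just before the statement. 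Since $\AR_2B\AR_2-\AR_2=\AR_2B(\AR_2-\AR_1)$, this produces exactly the claimed integrand.

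\emph{Lower bound.} Here the roles are swapped so that the remainder involves only $u_2$. By \eqref{eq:lambdaweak} with the coefficients interchanged, $\inner{f,\Lambda_2 f}=\int_\Omega A_1\nabla u_1\cdot\overline{\nabla u_2}\,\di x$, so $\redel\inner{f,\Lambda_2 f}=\redel\int_\Omega \AR_1\nabla u_1\cdot\overline{w}\,\di x$ with $w=BA_1^*\nabla u_2$. Then
\begin{align*}
\inner{f,(\LambdaR_1-\LambdaR_2)f}&=\int_\Omega\AR_1\nabla u_1\cdot\overline{\nabla u_1}\,\di x-2\redel\inner{f,\Lambda_2 f}+\int_\Omega\AR_2\nabla u_2\cdot\overline{\nabla u_2}\,\di x\\
&=\int_\Omega\AR_1(\nabla u_1-w)\cdot\overline{(\nabla u_1-w)}\,\di x+\int_\Omega\bigl[\AR_2-A_1BA_1^*\bigr]\nabla u_2\cdot\overline{\nabla u_2}\,\di x .
\end{align*}
The first integral is nonnegative, and
\begin{equation*}
A_1BA_1^*=(\AR_1+\I\AI_1)B(\AR_1-\I\AI_1)=\AR_1+\AI_1B\AI_1,
\end{equation*}
because the cross terms $\I(\AI_1 B\AR_1-\AR_1 B\AI_1)=\I(\AI_1-\AI_1)$ cancel. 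This gives the lower bound $\int_\Omega[(\AR_2-\AR_1)-\AI_1B\AI_1]\nabla u_2\cdot\overline{\nabla u_2}\,\di x$.

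\emph{Main obstacle.} The main care point is bookkeeping rather than analysis. For each inequality one must choose the version of \eqref{eq:lambdaweak} (which coefficient is the ``test'' one) so that the cross term pairs $\nabla u_1$ with something depending only on $u_2$. One must also handle the sesquilinear conventions correctly: adjoints move across the dot as $Mz\cdot\overline{y}=z\cdot\overline{M^*y}$, and $\AR_1$, $B$, $\AI_j$ are self-adjoint. All integrands are in $L^1$ because the coefficients are $L^\infty$ and $u_j\in H^1$, so no further justification is needed.
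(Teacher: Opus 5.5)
Your proof is correct: both completions of the square check out, since the shifts $(\AR_1)^{-1}A_2\nabla u_2$ and $(\AR_1)^{-1}A_1^*\nabla u_2$ reproduce the cross terms $\inner{f,\Lambda_1 f}$ and $\inner{f,\Lambda_2 f}$ exactly, and the expansions of $A_2^*(\AR_1)^{-1}A_2$ and of $A_1(\AR_1)^{-1}A_1^* = \AR_1 + \AI_1(\AR_1)^{-1}\AI_1$ give precisely the stated integrands. The paper only imports this result from \cite{GJZ2025}, but your argument uses the same $\AR_1$-weighted completing-the-square technique as its proof of Theorem~\ref{thm:improvedmono}; the difference is that there the shifts contain only self-adjoint parts, which leaves the mixed term $2\imdel\int_\Omega \AI_j\nabla u_1\cdot\overline{\nabla u_2}\,\di x$ instead of your $u_2$-only terms involving $\AI_j(\AR_1)^{-1}\AI_j$.
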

	
	\begin{remark} \label{remark:lastmonoterm}
		The term $[\AR_2(\AR_1)^{-1}\AI_2]\upI$ vanishes in the isotropic case by commutativity. In the anisotropic case it also vanishes in the parts of the domain where $\AR_1 = \AR_2$ or where $\AI_2 = 0$.
	\end{remark}
	
	\begin{proposition}[Proposition 5.6 in \cite{GJZ2025}] \label{prop:bnd}
		Let $c\geq 0$ and $V \subseteq \Omega$. Let $A_1,A_2\in\Hsa(\Omega)$ with $A_1 \leq \beta I$ and $A_2 \geq \alpha I$  in $V$ for $\alpha,\beta>0$. 
		\begin{enumerate}[\rm(i)]
			\item If $A_2-A_1 \geq cI$ in $V$, then $A_2A_1^{-1}(A_2-A_1) \geq cI$ in $V$.
			\item If $A_2-A_1 \leq -cI$ in $V$, then $A_2A_1^{-1}(A_2-A_1) \leq -c(\tfrac{\alpha}{\beta})^2I$ in $V$.
		\end{enumerate}
	\end{proposition}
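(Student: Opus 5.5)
The proof is purely pointwise linear algebra: for a.e.\ $x\in V$ the matrices $A_1(x),A_2(x)$ are self-adjoint and positive definite (they lie in $\Hsa(\Omega)$), so I fix such an $x$ and suppress it. The key step is to rewrite $A_2A_1^{-1}(A_2-A_1)$ in a visibly self-adjoint form, so that the Loewner inequalities make sense and reduce to elementary congruence arguments.

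For (i), I will use $A_2A_1^{-1} = I + (A_2-A_1)A_1^{-1}$ and set $B = A_2-A_1$. This gives
\begin{equation*}
	A_2A_1^{-1}(A_2-A_1) = B + BA_1^{-1}B .
\end{equation*}
This matrix is self-adjoint. Since $A_1^{-1}>0$, we have $BA_1^{-1}B\xi\cdot\overline{\xi} = A_1^{-1}B\xi\cdot\overline{B\xi}\geq 0$, so $BA_1^{-1}B\geq 0$. Hence $A_2A_1^{-1}(A_2-A_1)\geq B\geq cI$. No upper bound on $A_1$ is needed here.

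For (ii), I will use the alternative form
\begin{equation*}
	A_2A_1^{-1}(A_2-A_1) = A_2A_1^{-1}A_2 - A_2 = A_2\bigl(A_1^{-1}-A_2^{-1}\bigr)A_2 ,
\end{equation*}
and bound $A_2^{-1}-A_1^{-1}$ from below. Set $E = A_1-A_2\geq cI$; note that $A_1 = A_2+E$ is invertible. From $A_2^{-1}-A_1^{-1} = A_1^{-1}EA_2^{-1}$ and $A_2^{-1} = A_1^{-1}+A_1^{-1}EA_2^{-1}$, substituting the second identity into the first yields the symmetric resolvent expansion
\begin{equation*}
	A_2^{-1}-A_1^{-1} = A_1^{-1}EA_1^{-1} + A_1^{-1}EA_2^{-1}EA_1^{-1}.
\end{equation*}
The last term is positive semidefinite because $A_2^{-1}>0$. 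By congruence, $A_1^{-1}EA_1^{-1}\geq cA_1^{-2}$. Since $0<A_1\leq\beta I$, functional calculus gives $A_1^{-2}\geq\beta^{-2}I$. Therefore $A_1^{-1}-A_2^{-1}\leq -\tfrac{c}{\beta^2}I$.

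Conjugating by the self-adjoint $A_2$ preserves the Loewner order, so $A_2A_1^{-1}(A_2-A_1)\leq -\tfrac{c}{\beta^2}A_2^2$. Since $A_2\geq\alpha I$, functional calculus gives $A_2^2\geq\alpha^2 I$, and hence the expression is at most $-c(\tfrac{\alpha}{\beta})^2 I$.

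The only delicate point is finding the right symmetric rewriting in (ii). The naive form $A_1^{-1}EA_2^{-1}$ is not self-adjoint and cannot be compared directly, which is why the expansion is iterated once to obtain a symmetric expression. Everything else is routine congruence and functional calculus, and it holds uniformly for a.e.\ $x\in V$, which gives the claim in $V$.
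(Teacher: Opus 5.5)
Your proof is correct. Part (i) via $A_2A_1^{-1}(A_2-A_1)=B+BA_1^{-1}B$ is right, and every step of part (ii) is valid: the rewriting $A_2(A_1^{-1}-A_2^{-1})A_2$, the two congruence bounds, and the functional-calculus bounds $A_1^{-2}\geq\beta^{-2}I$ and $A_2^2\geq\alpha^2 I$. These last two separate scalar bounds are exactly where the factor $(\alpha/\beta)^2$ comes from. The present paper does not prove this proposition; it imports it from \cite{GJZ2025}. So there is no in-paper argument to match, and your proof serves as a self-contained verification.

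There is one small slip in your derivation of the symmetric identity. Substituting $A_2^{-1}=A_1^{-1}+A_1^{-1}EA_2^{-1}$ into $A_1^{-1}EA_2^{-1}$ gives $A_1^{-1}EA_1^{-1}+A_1^{-1}EA_1^{-1}EA_2^{-1}$, which is not the symmetric expression you wrote. To obtain it, substitute the other factorization $A_2^{-1}=A_1^{-1}+A_2^{-1}EA_1^{-1}$, which comes from $A_2^{-1}-A_1^{-1}=A_2^{-1}EA_1^{-1}$. Alternatively, check directly that $A_1^{-1}E(I+A_2^{-1}E)A_1^{-1}=A_1^{-1}EA_2^{-1}(A_2+E)A_1^{-1}=A_1^{-1}EA_2^{-1}$. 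The identity you use is true, so nothing downstream is affected.

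As an aside, operator monotonicity of inversion gives a shorter route with a sharper constant. From $A_1\geq A_2+cI>0$ one has $A_1^{-1}\leq (A_2+cI)^{-1}$. Congruence by $A_2$ then gives
\[
A_2A_1^{-1}(A_2-A_1)=A_2(A_1^{-1}-A_2^{-1})A_2\leq A_2\bigl[(A_2+cI)^{-1}-A_2^{-1}\bigr]A_2=-cA_2(A_2+cI)^{-1}\leq -\tfrac{c\alpha}{\alpha+c}I\leq -c\tfrac{\alpha}{\beta}I .
\]
The last two steps use that $\lambda\mapsto\lambda/(\lambda+c)$ is increasing and that $(\alpha+c)I\leq A_2+cI\leq A_1\leq\beta I$. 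This avoids the resolvent expansion. It also shows that the improvement to $\alpha/\beta$, which Remark~\ref{remark:isotropicbnd} records only for the isotropic case, holds in the anisotropic case as well. That would likewise allow $\beta^2/\alpha^3$ to be replaced by $\beta/\alpha^2$ in Assumption~\ref{assump:recon}(iv)(a). Your argument, by contrast, reproduces exactly the constant as stated.
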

	
	\begin{remark} \label{remark:isotropicbnd}
		In the isotropic case, $(\alpha/\beta)^2$ can be replaced by $\alpha/\beta$ in Proposition~\ref{prop:bnd}(ii).
	\end{remark}
	
	\begin{theorem}[Theorem 6.3 in \cite{GJZ2025}] \label{thm:locpot}
		Let $U\subset \overline{\Omega}$ be a relatively open connected set that intersects $\Gamma$ and let $B\subset U$ be a non-empty open set. Assume $A\in \mathcal{H}(\Omega)$ where both $A$ and $A^*$ satisfy the UCP in $U$. Then there are sequences $(f_j)$ in $L^2_\diamond(\Gamma)$ and $(u_j)$ in $H^1_\diamond(\Omega)$, with $u_j = u_{f_j}^A$, such that
		\begin{equation*}
			\lim_{j\to\infty} \int_B \abs{\nabla u_j}^2\,\di x = \infty \qquad \text{and} \qquad \lim_{j\to\infty}\int_{\Omega\setminus U} \abs{\nabla u_j}^2\,\di x = 0. 
		\end{equation*}
	\end{theorem}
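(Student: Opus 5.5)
The plan is to follow the standard functional-analytic route to localized potentials due to Gebauer, as adapted to anisotropic coefficients in \cite{GJZ2025}. We will reduce the existence of the sequences $(f_j)$ to showing that one operator is not dominated by another, and we will settle that domination question by a range comparison of adjoints together with unique continuation.

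\emph{Step 1 (reduction).} We may shrink $B$ to a small open ball with $\overline{B}\subset U$, chosen so that $U\setminus\overline{B}$ is still relatively open and connected and still meets $\Gamma$. We introduce the bounded linear operators
\begin{equation*}
	L_B\colon L^2_\diamond(\Gamma)\to L^2(B)^d,\quad f\mapsto \nabla u_f^A|_B, \qquad L_{U^c}\colon L^2_\diamond(\Gamma)\to L^2(\Omega\setminus U)^d,\quad f\mapsto \nabla u_f^A|_{\Omega\setminus U}.
\end{equation*}
Suppose no constant $K$ satisfies $\norm{L_B f}\leq K\norm{L_{U^c}f}$ for all $f$. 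Then there are $g_j$ with $\norm{L_B g_j}=1$ and $\norm{L_{U^c}g_j}\leq j^{-2}$. Setting $f_j = j\,g_j$ gives both limits in the statement, by linearity of $f\mapsto u_f^A$. By the range characterization of domination (\cite[lemma 2.5]{Gebauer2008} style), such a $K$ exists if and only if $\mathcal{R}(L_B^*)\subseteq\mathcal{R}(L_{U^c}^*)$. It therefore suffices to prove two things:
\begin{equation*}
	\mathcal{R}(L_B^*)\cap\mathcal{R}(L_{U^c}^*)=\{0\} \qquad\text{and}\qquad L_B\neq 0.
\end{equation*}

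\emph{Step 2 (adjoints and the range intersection).} Using the weak formulation \eqref{eq:lambdaweak} with $A^*$ in place of one of the coefficients, we identify $L_B^*F = w_F|_\Gamma$, where $w_F\in H^1_\diamond(\Omega)$ solves
\begin{equation*}
	\int_\Omega A^*\nabla w_F\cdot\overline{\nabla v}\,\di x = \int_B F\cdot\overline{\nabla v}\,\di x \quad\text{for all } v\in H^1(\Omega).
\end{equation*}
This is the equation $-\nabla\cdot(A^*\nabla w)=-\nabla\cdot(\chi_B F)$ with homogeneous conormal data. The operator $L_{U^c}^*$ is described the same way, with the source on $\Omega\setminus U$.

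Now suppose $w_F|_\Gamma = w_G|_\Gamma=\varphi$. Then $w_F-w_G$ solves the homogeneous $A^*$-equation in $U\setminus\overline{B}$ and has vanishing Dirichlet and conormal data on $\Gamma\cap U$. The sources are supported in $\overline{B}$ and in $\Omega\setminus U$, so neither touches $U\setminus\overline{B}$. Extending by zero across $\Gamma\cap U$ to a small exterior neighbourhood and applying the UCP for $A^*$ in $U$ (on the connected set $U\setminus\overline{B}$), we get $w_F=w_G$ in $U\setminus\overline{B}$.

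Next we glue: define $w=w_G$ in $\overline{B}$ together with $\Omega\setminus U$'s complement region, i.e.\ set $w=w_G$ on $U$ and $w=w_F$ on $\Omega\setminus U$. These two pieces agree on the overlap region $U\setminus\overline{B}$, which collar-separates the two source supports, so $w\in H^1(\Omega)$. Testing with $v$ and using a cutoff supported in $U$ shows that $w$ solves the homogeneous $A^*$-problem with zero Neumann data on all of $\partial\Omega$. Hence $w$ is constant, so $w=0$ in $H^1_\diamond(\Omega)$ and $\varphi=0$.

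\emph{Step 3 (nontriviality).} Suppose $L_B=0$. Then each $u_f^A$ is constant on $B$. By the UCP for $A$ in $U$, $u_f^A$ is constant in $U$, and hence $f = \nu\cdot(A\nabla u_f^A)=0$ on $\Gamma\cap U$ for every $f$. This is absurd: choose $f\in L^2_\diamond(\Gamma)$ that is nonzero on $\Gamma\cap U$.

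\emph{Main obstacle.} The hardest part is Step 2: making the UCP argument and the gluing rigorous for merely $L^\infty$ coefficients with weak conormal conditions, and arranging the geometry of $B$ and $U$ so that the overlap region is connected and reaches $\Gamma$. Step 2 is also where the hypothesis that $A^*$ (not just $A$) satisfies the UCP is used, since the adjoint solutions solve the $A^*$-equation. I would handle the boundary part by the usual exterior extension of $\Omega$ near a point of $\Gamma\cap U$. The identification of the adjoints and the final scaling argument are routine.
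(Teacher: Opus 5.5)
The paper does not prove this statement; it imports it from \cite{GJZ2025}. Your argument is the standard Gebauer-type localized-potentials proof behind that result, and it is correct: norm domination $\Leftrightarrow$ range inclusion of adjoints, $L_B^*$ and $L_{U^c}^*$ realized through the $A^*$-problem, UCP for $A^*$ plus gluing to get $\mathcal{R}(L_B^*)\cap\mathcal{R}(L_{U^c}^*)=\{0\}$, and UCP for $A$ to get $L_B\neq 0$. The one thing to state explicitly is that the UCP hypothesis is applied on the subset $U\setminus\overline{B}$ and from Cauchy data on $\Gamma\cap U$, so ``UCP in $U$'' must be read in that local, boundary-inclusive sense.
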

	
	\section{Proof of Theorem \ref{thm:mainnonlinear}} \label{sec:proofmainnonlinear}
	
	First note that
	\begin{equation} \label{eq:matbnd0}
		\AI_D(\AR_D)^{-1}\AI_D\xi\cdot\overline{\xi} = (\AR_D)^{-1}\AI_D\xi\cdot\overline{\AI_D\xi} \leq \alpha^{-1}\abs{\AI_D\xi}^2\leq \tfrac{\eta^2}{\alpha}\abs{\xi}^2.
	\end{equation}
	Let $D\subseteq C$. Let $u_D = u_f^{A_D}$ and $u_+ = u_f^{A_C^+}$. From Theorem~\ref{thm:generalmono} and \eqref{eq:matbnd0} we have
	\begin{align*}
		\inner{f,\bigl[ \LambdaR(A_D) - \Lambda_C^+ - \DLambda_{M\setminus C}^+ \bigr]f} &\geq \int_\Omega \bigl[ A_C^+ - \AR_D - \AI_D(\AR_D)^{-1}\AI_D \bigr] \nabla u_+\cdot\overline{\nabla u_+}\,\di x \\
		&\phantom{={}} + \int_{M\setminus C} \AI_0(\AR_0)^{-1}\AI_0\nabla u_+ \cdot\overline{\nabla u_+}\,\di x \\
		&= \int_C \bigl[ (\beta + \tfrac{\eta^2}{\alpha})I - \AR_D - \AI_D(\AR_D)^{-1}\AI_D \bigr]\nabla u_+ \cdot\overline{\nabla u_+}\,\di x \geq 0.
	\end{align*}
	For the other inequality, we have from Theorem~\ref{thm:generalmono}:
	\begin{align*}
		\inner{f,\bigl[ \Lambda_C^- - \LambdaR(A_D) \bigr]f} &\geq \int_\Omega (\AR_D - A_C^-)\nabla u_D\cdot\overline{\nabla u_D}\,\di x \\
		&= \int_C (\AR_D - \alpha I)\nabla u_D\cdot\overline{\nabla u_D}\,\di x \geq 0.
	\end{align*}
	
	Now assume that $D \not\subseteq C$, i.e.~that $D^\bullet\not\subseteq C$. Since both sets are closures of open sets and have connected complements, there is a relatively open set that connects $D^\bullet\setminus C$ with $\Gamma$ that we may use for localization. Specifically, there exist a relatively open connected set $U\subset (D\cup S)\setminus C$ that intersects $\Gamma$ and an open ball $B\subset U \cap D$. Moreover, Assumption~\ref{assump:recon}(iv) ensures that we can pick $B$ and $U$ such that we arrive at one of two cases:
	\begin{itemize}
		\item Case 1 corresponds to part (a) of Assumption~\ref{assump:recon}(iv) where $V = U\cap D$.
		\item Case 2 corresponds to part (b) of Assumption~\ref{assump:recon}(iv) where $V = U\cap D$.
	\end{itemize}
	In the first case we will prove that $\LambdaR(A_D) \not\geq \Lambda^+_{C} + \DLambda_{M\setminus C}^+$, and in the second case we will prove that $\Lambda^-_{C} \not\geq \LambdaR(A_D)$. Together the two cases prove the contrapositive formulation of the assertion 
	\begin{equation*}
		\Lambda^-_{C} \geq \LambdaR(A_D) \geq \Lambda^+_{C} + \DLambda_{M\setminus C}^+ \quad \text{\emph{implies}} \quad D \subseteq C.
	\end{equation*}
	
	\subsection*{Case 1}
	
	Since $C\subset \Omega\setminus U$, from Theorem~\ref{thm:locpot} there is a sequence $(f_j)$ in $L^2_\diamond(\Gamma)$ such that $u_j = u_{f_j}^{A_C^+}$ satisfy
	\begin{equation*}
		\lim_{j\to\infty} \int_B \abs{\nabla u_j}^2\,\di x = \infty \qquad \text{and} \qquad \lim_{j\to\infty}\int_{\Omega\setminus U} \abs{\nabla u_j}^2\,\di x = 0.
	\end{equation*}
	From Theorem~\ref{thm:generalmono} we have
	\begin{align*}
		\inner{f,\bigl[ \LambdaR(A_D) - \Lambda_C^+ - \DLambda_{M\setminus C}^+ \bigr]f} &\leq \int_\Omega A_C^+(\AR_D)^{-1}(A_C^+ - \AR_D)\nabla u_j\cdot\overline{\nabla u_j}\,\di x \\
		&\phantom{{}\leq}+ \int_{M\setminus C}\AI_0(\AR_0)^{-1}\AI_0\nabla u_j\cdot\overline{\nabla u_j}\,\di x.
	\end{align*}
	Let 
	\begin{align*}
		\textup{I}^1_j &= \int_{\Omega\setminus U} A_C^+(\AR_D)^{-1}(A_C^+ - \AR_D)\nabla u_j\cdot\overline{\nabla u_j}\,\di x, \\
		\textup{I}^2_j &= \int_{M\setminus(U\cup C)} \AI_0(\AR_0)^{-1}\AI_0\nabla u_j\cdot\overline{\nabla u_j}\,\di x, 
	\end{align*}
	then we have $\lim_{j\to\infty} \textup{I}_j^k = 0$ for $k\in\{1,2\}$ as these integrals are evaluated inside $\Omega\setminus U$. Since $\AI_0 = 0$ in $U\setminus D$ (from the choice of $U$), we have
	\begin{equation*}
		\inner{f,\bigl[ \LambdaR(A_D) - \Lambda_C^+ - \DLambda_{M\setminus C}^+ \bigr]f} \leq \int_{U\cap D} \bigl[\AR_0(\AR_D)^{-1}(\AR_0 - \AR_D) + \AI_0(\AR_0)^{-1}\AI_0 \bigr]\nabla u_j\cdot\overline{\nabla u_j}\,\di x + \textup{I}^1_j + \textup{I}^2_j.
	\end{equation*}
	Based on Assumption~\ref{assump:recon}(iv) part (a), since $\AR_0 - \AR_D \leq -\tau^+ I$ and $\AI_0(\AR_0)^{-1}\AI_0 \leq (\tfrac{\alpha}{\beta})^2\tau^+ I$ in $U\cap D$, and $\AR_0 - \AR_D \leq -(\tau^+ + c) I$ in~$B$, then Proposition~\ref{prop:bnd}(ii) gives
	\begin{equation*}
		\inner{f,\bigl[ \LambdaR(A_D) - \Lambda_C^+ - \DLambda_{M\setminus C}^+ \bigr]f} \leq -c(\tfrac{\alpha}{\beta})^2\int_B \abs{\nabla u_j}\,\di x + \textup{I}^1_j + \textup{I}^2_j \to -\infty \text{ for } j\to\infty.
	\end{equation*}
	In particular, $\LambdaR(A_D) \not\geq \Lambda^+_{C} + \DLambda_{M\setminus C}^+$.
	
	\subsection*{Case 2}
	
	Since $C\subset \Omega\setminus U$, from Theorem~\ref{thm:locpot} there is a sequence $(f_j)$ in $L^2_\diamond(\Gamma)$ such that $u_j = u_{f_j}^{A_C^-}$ satisfy
	\begin{equation*}
		\lim_{j\to\infty} \int_B \abs{\nabla u_j}^2\,\di x = \infty \qquad \text{and} \qquad \lim_{j\to\infty}\int_{\Omega\setminus U} \abs{\nabla u_j}^2\,\di x = 0.
	\end{equation*}
	Let 
	\begin{equation*}
		\textup{I}_j = \int_{\Omega\setminus U} \bigl[ \AR_D - A_C^- + \AI_D(\AR_D)^{-1}\AI_D \bigr]\nabla u_j \cdot \overline{\nabla u_j}\,\di x,
	\end{equation*}
	then we have $\lim_{j\to\infty} \textup{I}_j = 0$. From Theorem~\ref{thm:generalmono} and as $\AI_0 = 0$ in $U\setminus D$ (from the choice of $U$), we have
	\begin{equation*}
		\inner{f,\bigl[ \Lambda_C^- - \LambdaR(A_D) \bigr]f} \leq \int_{U\cap D} \bigl[\AR_D - \AR_0 + \AI_D(\AR_D)^{-1}\AI_D\bigr]\nabla u_j\cdot\overline{\nabla u_j}\,\di x + \textup{I}_j.
	\end{equation*}
	Based on Assumption~\ref{assump:recon}(iv) part (b), since $\AR_D - \AR_0 \leq -\tau^- I$ and $\AI_D(\AR_D)^{-1}\AI_D \leq \tau^- I$ in $U\cap D$, and $\AR_D - \AR_0 \leq -(\tau^- + c) I$ in~$B$, then 
	\begin{equation*}
		\inner{f,\bigl[ \Lambda_C^- - \LambdaR(A_D) \bigr]f} \leq -c\int_B \abs{\nabla u_j}\,\di x + \textup{I}_j \to -\infty \text{ for } j\to\infty.
	\end{equation*}
	In particular, $\Lambda_C^- \not\geq \LambdaR(A_D)$.
	
	\section{Proof of Theorem \ref{thm:mainlinear}} \label{sec:proofmainlinear}
	
	Let $D\subseteq C$ and let $u_0 = u_f^{\AR_0}$. From Theorem~\ref{thm:generalmono} and \eqref{eq:matbnd0} we have
	\begin{align*}
		\inner{f,\bigl[ \LambdaR(A_D)-\Lambda(\AR_0)-\DLambda_C^+ - \DLambda_{M\setminus C} \bigr]f} \hspace{-6cm}&\\
		&\geq \int_\Omega \bigl[ \AR_0 - \AR_D - \AI_D(\AR_D)^{-1}\AI_D \bigr]\nabla u_0\cdot\overline{\nabla u_0}\,\di x + \int_C \bigl[ \bigl(\beta+\tfrac{\eta^2}{\alpha}\bigr)I - \AR_0 \bigr]\nabla u_0\cdot\overline{\nabla u_0}\,\di x \\
		&\phantom{={}}  + \int_{M\setminus C} \AI_0(\AR_0)^{-1}\AI_0\nabla u_0\cdot\overline{\nabla u_0}\,\di x \\
		&= \int_D \bigl[ \AR_0 - \AR_D - \AI_D(\AR_D)^{-1}\AI_D \bigr]\nabla u_0\cdot\overline{\nabla u_0}\,\di x + \int_C \bigl[ \bigl(\beta+\tfrac{\eta^2}{\alpha}\bigr)I - \AR_0 \bigr]\nabla u_0\cdot\overline{\nabla u_0}\,\di x \\
		&\phantom{={}} - \int_{M\setminus D} \AI_0(\AR_0)^{-1}\AI_0\nabla u_0\cdot\overline{\nabla u_0}\,\di x + \int_{M\setminus C} \AI_0(\AR_0)^{-1}\AI_0\nabla u_0\cdot\overline{\nabla u_0}\,\di x \\
		&\geq \int_{C\setminus D} \bigl[ \bigl(\beta+\tfrac{\eta^2}{\alpha}\bigr)I - \AR_0 \bigr]\nabla u_0\cdot\overline{\nabla u_0}\,\di x - \int_{M\cap (C\setminus D)} \AI_0(\AR_0)^{-1}\AI_0\nabla u_0\cdot\overline{\nabla u_0}\,\di x \\
		&\geq \int_{M\cap (C\setminus D)} \bigl[\tfrac{\eta^2}{\alpha}I - \AI_0(\AR_0)^{-1}\AI_0 \bigr] \nabla u_0\cdot\overline{\nabla u_0}\,\di x \geq 0.
	\end{align*}
	For the other inequality, we first note that
	\begin{equation} \label{eq:matbnd}
		\AR_0(\AR_D)^{-1}\AR_0\xi\cdot\overline{\xi} = (\AR_D)^{-1}\AR_0\xi\cdot\overline{\AR_0\xi} \leq \alpha^{-1}\abs{\AR_0\xi}^2\leq \tfrac{\beta^2}{\alpha}\abs{\xi}^2.
	\end{equation}
	Hence we have from Theorem~\ref{thm:generalmono} and~\eqref{eq:matbnd}:
	\begin{align*}
		\inner{f,\bigl[ \Lambda(\AR_0)-\LambdaR(A_D)+\DLambda_C^- \bigr]f} \hspace{-4cm}&\\
		&\geq \int_\Omega \AR_0(\AR_D)^{-1}(\AR_D-\AR_0)\nabla u_0\cdot\overline{\nabla u_0}\,\di x + \int_C \bigl[\tfrac{\beta^2}{\alpha}I-\AR_0\bigr]\nabla u_0\cdot\overline{\nabla u_0}\,\di x \\
		&= \int_D \bigl[\AR_0-\AR_0(\AR_D)^{-1}\AR_0)\bigr]\nabla u_0\cdot\overline{\nabla u_0}\,\di x + \int_C \bigl[\tfrac{\beta^2}{\alpha}I-\AR_0\bigr]\nabla u_0\cdot\overline{\nabla u_0}\,\di x \\
		&\geq \int_{C\setminus D} \bigl[\tfrac{\beta^2}{\alpha}I-\AR_0\bigr]\nabla u_0\cdot\overline{\nabla u_0}\,\di x \geq 0.
	\end{align*}
	
	Now assume that $D \not\subseteq C$, i.e.~that $D^\bullet\not\subseteq C$. Since both sets are closures of open sets and have connected complements, there is a relatively open set that connects $D^\bullet\setminus C$ with $\Gamma$ that we may use for localization. Specifically, there exist a relatively open connected set $U\subset (D\cup S)\setminus C$ that intersects $\Gamma$ and an open ball $B\subset U \cap D$. Moreover, Assumption~\ref{assump:recon}(iv) ensures that we can pick $B$ and $U$ such that we arrive at one of two cases:
	\begin{itemize}
		\item Case 1 corresponds to part (a) of Assumption~\ref{assump:recon}(iv) where $V = U\cap D$.
		\item Case 2 corresponds to part (b) of Assumption~\ref{assump:recon}(iv) where $V = U\cap D$.
	\end{itemize}
	In the first case we will prove that $\LambdaR(A_D) - \Lambda(\AR_0) \not\geq \DLambda^+_{C} + \DLambda_{M\setminus C}$, and in the second case we will prove that $\DLambda^-_{C} \not\geq \LambdaR(A_D) - \Lambda(\AR_0)$. Together the two cases prove the contrapositive formulation of the assertion 
	\begin{equation*}
		\DLambda^-_{C} \geq \LambdaR(A_D) - \Lambda(\AR_0) \geq \DLambda^+_{C}+\DLambda_{M\setminus C} \quad \text{\emph{implies}} \quad D \subseteq C.
	\end{equation*}
	For both cases we use the same sequence of localized potentials. From Theorem~\ref{thm:locpot} there is a sequence $(f_j)$ in $L^2_\diamond(\Gamma)$ such that $u_j = u_{f_j}^{\AR_0}$ satisfy
	\begin{equation*}
		\lim_{j\to\infty} \int_B \abs{\nabla u_j}^2\,\di x = \infty \qquad \text{and} \qquad \lim_{j\to\infty}\int_{\Omega\setminus U} \abs{\nabla u_j}^2\,\di x = 0.
	\end{equation*}
	
	\subsection*{Case 1}
	
	Let 
	\begin{align*}
		\textup{I}^1_j &= -\inner{f_j,\DLambda_C^+ f_j}, \\
		\textup{I}^2_j &= \int_{\Omega\setminus U} \AR_0(\AR_D)^{-1}(\AR_0 - \AR_D)\nabla u_j\cdot\overline{\nabla u_j}\,\di x, \\
		\textup{I}^3_j &= \int_{M\setminus(U\cup C)} \AI_0(\AR_0)^{-1}\AI_0\nabla u_j\cdot\overline{\nabla u_j}\,\di x,
	\end{align*}
	then we have $\lim_{j\to\infty} \textup{I}_j^k = 0$ for $k\in\{1,2,3\}$ as these integrals are evaluated inside $\Omega\setminus U$. From Theorem~\ref{thm:generalmono} and as $\AI_0 = 0$ in $U\setminus D$ (from the choice of $U$), we have
	\begin{align*}
		\inner{f_j,\bigl[ \LambdaR(A_D)-\Lambda(\AR_0)-\DLambda_C^+ - \DLambda_{M\setminus C} \bigr]f_j} \hspace{-6cm}&\\
		&\leq \int_\Omega \AR_0(\AR_D)^{-1}(\AR_0 - \AR_D)\nabla u_j\cdot\overline{\nabla u_j}\,\di x + \textup{I}_j^1  + \int_{M\setminus C} \AI_0(\AR_0)^{-1}\AI_0\nabla u_j\cdot\overline{\nabla u_j}\,\di x \\
		&= \int_{U\cap D} \bigl[\AR_0(\AR_D)^{-1}(\AR_0 - \AR_D) + \AI_0(\AR_0)^{-1}\AI_0 \bigr]\nabla u_j\cdot\overline{\nabla u_j}\,\di x + \sum_{k=1}^3 \textup{I}_j^k.
	\end{align*}
	Based on Assumption~\ref{assump:recon}(iv) part (a), since $\AR_0 - \AR_D \leq -\tau^+ I$ and $\AI_0(\AR_0)^{-1}\AI_0 \leq (\tfrac{\alpha}{\beta})^2\tau^+ I$ in $U\cap D$, and $\AR_0 - \AR_D \leq -(\tau^+ + c) I$ in~$B$, then Proposition~\ref{prop:bnd}(ii) gives
	\begin{equation*}
		\inner{f_j,\bigl[ \LambdaR(A_D)-\Lambda(\AR_0)-\DLambda_C^+ - \DLambda_{M\setminus C} \bigr]f_j} \leq -c(\tfrac{\alpha}{\beta})^2\int_B \abs{\nabla u_j}^2\,\di x + \sum_{k=1}^3 \textup{I}_j^k \to -\infty \text{ for } j\to\infty.
	\end{equation*}
	In particular, $\LambdaR(A_D)-\Lambda(\AR_0) \not\geq \DLambda_C^+ + \DLambda_{M\setminus C}$.
	
	\subsection*{Case 2}
	
	Let 
	\begin{align*}
		\textup{I}^1_j &= \inner{f_j,\DLambda_C^- f_j}, \\
		\textup{I}^2_j &= \int_{\Omega\setminus U} \bigl[\AR_D-\AR_0 + \AI_D(\AR_D)^{-1}\AI_D \bigr]\nabla u_j\cdot\overline{\nabla u_j}\,\di x, 
	\end{align*}
	then we have $\lim_{j\to\infty} \textup{I}_j^k = 0$ for $k\in\{1,2\}$ as these integrals are evaluated inside $\Omega\setminus U$. From Theorem~\ref{thm:generalmono} and as $\AI_0 = 0$ in $U\setminus D$ (from the choice of $U$), we have
	\begin{align*}
		\inner{f_j,\bigl[ \Lambda(\AR_0)-\LambdaR(A_D)+\DLambda_C^- \bigr]f_j} &\leq \int_\Omega \bigl[\AR_D-\AR_0 + \AI_D(\AR_D)^{-1}\AI_D \bigr]\nabla u_j\cdot\overline{\nabla u_j}\,\di x + \textup{I}_j^1 \\
		&= \int_{U\cap D} \bigl[\AR_D-\AR_0 + \AI_D(\AR_D)^{-1}\AI_D \bigr]\nabla u_j\cdot\overline{\nabla u_j}\,\di x  + \textup{I}_j^1 + \textup{I}_j^2.
	\end{align*}
	Based on Assumption~\ref{assump:recon}(iv) part (b), since $\AR_D - \AR_0 \leq -\tau^- I$ and $\AI_D(\AR_D)^{-1}\AI_D \leq \tau^- I$ in $U\cap D$, and $\AR_D - \AR_0 \leq -(\tau^- + c) I$ in~$B$, then 
	\begin{equation*}
		\inner{f_j,\bigl[ \Lambda(\AR_0)-\LambdaR(A_D)+\DLambda_C^- \bigr]f_j} \leq -c\int_B \abs{\nabla u_j}^2\,\di x + \textup{I}_j^1 + \textup{I}_j^2 \to -\infty \text{ for } j\to\infty.
	\end{equation*}
	In particular, $\DLambda_C^- \not \geq \LambdaR(A_D) - \Lambda(\AR_0)$.

	\section{Proof of Theorem \ref{thm:mainextreme}} \label{sec:proofmainextreme}
	
	Assume that $D\subseteq C$. For $\epsilon>0$ consider the following truncated coefficient:
	\begin{equation*}
		A_\epsilon = \begin{dcases}
			A_0 & \text{in } \Omega\setminus C \\
			\epsilon^{-1}\AR_D & \text{in } C.
		\end{dcases}
	\end{equation*}
	Let $u_\epsilon = u_f^{A_\epsilon}$ and recall that $A_0\in\Hsa(\Omega)$ is assumed in this theorem, meaning that $A_\epsilon\in\Hsa(\Omega)$. Let $\alpha>0$ and $\eta>0$ satisfy that $\AR_D\geq \alpha I$ in $\Omega$ and $\norm{\AI_D}_{\mathcal{H}(\Omega)}\leq \eta$. Then from Theorem~\ref{thm:generalmono} and $\epsilon\in(0,1]$ we have the estimate
	\begin{align*}
		\inner{f,\bigl[\LambdaR(A_D)-\Lambda(A_\epsilon)\bigr]f} &\geq \int_C \bigl[(\epsilon^{-1}-1)\AR_D - \AI_D(\AR_D)^{-1}\AI_D \bigr]\nabla u_\epsilon\cdot\overline{\nabla u_\epsilon}\,\di x \\
		&\geq \bigl[ (\epsilon^{-1}-1)\alpha - \tfrac{\eta^2}{\alpha}\bigr]\int_C \abs{\nabla u_\epsilon}^2\,\di x.
	\end{align*}
	Hence for $\epsilon \leq 1/(1 + (\eta/\alpha)^2)$ we have
	\begin{equation*}
		\inner{f,\bigl[\LambdaR(A_D)-\Lambda(A_\epsilon)\bigr]f} \geq 0.
	\end{equation*}
	Since $\Lambda(A_\epsilon) \to \Lambda_\emptyset^C$ in the operator norm as $\epsilon\to 0$ by \cite[theorem~13.3 and remark~13.4]{GJZ2025}, we conclude that 
	\begin{equation*}
		\LambdaR(A_D) \geq \Lambda_\emptyset^C.
	\end{equation*}
	From Theorem~\ref{thm:generalmono} we have $\Lambda(\AR_D) \geq \LambdaR(A_D)$ and from \cite[theorem~12.2]{GJZ2025} we have $\Lambda_C^\emptyset \geq \Lambda(\AR_D)$. So we conclude that
	\begin{equation*}
		\Lambda_C^\emptyset \geq \LambdaR(A_D).
	\end{equation*}
	
	Now assume that $D \not\subseteq C$, i.e.~that $D^\bullet\not\subseteq C$. Since both sets are closures of open sets and have connected complements, there is a relatively open set that connects $D^\bullet\setminus C$ with $\Gamma$ that we may use for localization. Specifically, there exist a relatively open connected set $U\subset \Omega\setminus C$ that intersects $\Gamma$ and an open ball $B\subset U \cap D$. Moreover, Assumption~\ref{assump:reconextreme}(iii) ensures that we can pick $B$ and $U$ such that we arrive at one of two cases:
	\begin{itemize}
		\item Case 1: $\AR_D - A_0$ is positive semidefinite in $U$ and uniformly positive definite in $B$.
		\item Case 2: $\AR_D - A_0$ is negative semidefinite in $U$ and uniformly negative definite in $B$.
	\end{itemize}
	In the first case we will prove that $\LambdaR(A_D) \not\geq \Lambda_\emptyset^C$, and in the second case we will prove that $\Lambda_C^\emptyset \not\geq \LambdaR(A_D)$. Together the two cases prove the contrapositive formulation of the assertion 
	\begin{equation*}
		\Lambda_C^{\emptyset} \geq \LambdaR(A_D) \geq \Lambda_{\emptyset}^C \quad \text{\emph{implies}} \quad D \subseteq C.
	\end{equation*}
	
	\subsection*{Case 1}
	
	Note that we have
	\begin{equation*}
		\LambdaR(A_D) - \Lambda_\emptyset^C = \bigl[\LambdaR(A_D) - \Lambda(\AR_D)\bigr] + \bigl[\Lambda(\AR_D) - \Lambda_\emptyset^C\bigr].
	\end{equation*}
	From Theorem~\ref{thm:generalmono} we have $\LambdaR(A_D) \leq \Lambda(\AR_D)$. From \cite[case 2 in proof of theorem~12.2]{GJZ2025} there exists $f_0\in L_\diamond^2(\Gamma)$ such that $\inner{f_0,\bigl[\Lambda(\AR_D) - \Lambda_\emptyset^C\bigr]f_0} < 0$, which implies
	\begin{equation*}
		\inner{f_0,\bigl[\LambdaR(A_D) - \Lambda_\emptyset^C\bigr]f_0} < 0,
	\end{equation*}
	i.e.~$\LambdaR(A_D) \not\geq \Lambda_\emptyset^C$.
	
	\subsection*{Case 2}
	
	Note that we have
	\begin{equation*}
		\Lambda_C^\emptyset - \LambdaR(A_D) = \bigl[\Lambda_C^\emptyset - \Lambda(A_0)\bigr] + \bigl[\Lambda(A_0) - \LambdaR(A_D)\bigr].
	\end{equation*}
	From Theorem~\ref{thm:locpot} there exists a sequence $(f_j)$ in $L^2_\diamond(\Gamma)$ such that $u_j = u_{f_j}^{A_0}$ satisfy
	\begin{equation*}
		\lim_{j\to\infty} \int_B \abs{\nabla u_j}^2\,\di x = \infty \qquad \text{and} \qquad \lim_{j\to\infty}\int_{\Omega\setminus U} \abs{\nabla u_j}^2\,\di x = 0.
	\end{equation*}
	Let
	\begin{align*}
		\textup{I}_j^1 &= \inner{f_j,\bigl[\Lambda_C^\emptyset - \Lambda(A_0)\bigr]f_j}, \\
		\textup{I}_j^2 &= \int_{\Omega\setminus U} \bigl[\AR_D - A_0 + \AI_D(\AR_D)^{-1}\AI_D\bigr]\nabla u_j\cdot\overline{\nabla u_j}\,\di x.
	\end{align*}
	Since $C\subset \Omega\setminus U$, and due to \cite[propositions~14.3 and~15.1]{GJZ2025}, then $\lim_{j\to\infty} \textup{I}_j^k = 0$ for $k\in\{1,2\}$.	
	
	Due to Assumption~\ref{assump:reconextreme}(ii), we may pick $U$ such that $\AI_D = 0$ in $U$. With that in mind, Theorem~\ref{thm:generalmono} gives
	\begin{align*}
		\inner{f_j,\bigl[\Lambda(A_0) - \LambdaR(A_D)\bigr]f_j} &\leq \int_U (\AR_D - A_0)\nabla u_j\cdot\overline{\nabla u_j}\,\di x + \textup{I}_j^2 \\
		&\leq -c\int_B\abs{\nabla u_j}^2\,\di x + \textup{I}_j^2,
	\end{align*}
	for some scalar $c>0$. In total we have
	\begin{equation*}
		\inner{f_j,\bigl[\Lambda_C^\emptyset - \LambdaR(A_D)\bigr]f_j} \leq -c\int_B\abs{\nabla u_j}^2\,\di x + \textup{I}_j^1 + \textup{I}_j^2 \to -\infty \text{ for } j\to\infty.
	\end{equation*}
	In particular, $\Lambda_C^\emptyset \not\geq \LambdaR(A_D)$.
	
	\subsection*{Acknowledgements}
	
	The authors are supported by grant 10.46540/3120-00003B from Independent Research Fund Denmark.
	
	\appendix
	\section{Towards improved monotonicity inequalities} \label{appA}
	
	In this section we prove inequalities in the style of Theorem~\ref{thm:generalmono}, but with optimal bounds in the sense that the lower and upper bounds are zero if the coefficients coincide (unlike in Theorem~\ref{thm:generalmono} for non-self-adjoint coefficients). We also prove that the additional terms that show up due to the non-self-adjointness can be controlled by the norm of $u_2$. Although, as will be apparent at the end of this appendix, an even more precise control in terms of $u_2$ will be needed in order to improve the monotonicity method in the non-self-adjoint setting, related to the use of localized potentials from Theorem~\ref{thm:locpot}.
	
	\begin{theorem}\label{thm:improvedmono}
		Let $A_1,A_2\in\mathcal{H}(\Omega)$ and $f\in L^2_\diamond(\Gamma)$. For $j\in\{1,2\}$ we denote $\Lambda_j = \Lambda(A_j)$ and $u_j = u_f^{A_j}$. Then
		\begin{align*}
			\inner{f,(\LambdaR_1-\LambdaR_2) f} &\geq \int_\Omega (\AR_2-\AR_1)\nabla u_2\cdot \overline{\nabla u_2}\,\di x + 2\imdel\int_\Omega \AI_1\nabla u_1\cdot\overline{\nabla u_2}\,\di x,  \\
			\inner{f,(\LambdaR_1-\LambdaR_2) f} &\leq \int_\Omega \AR_2(\AR_1)^{-1}(\AR_2-\AR_1)\nabla u_2\cdot\overline{\nabla u_2}\,\di x + 2\imdel \int_\Omega \AI_2\nabla u_1\cdot\overline{\nabla u_2}\,\di x.
		\end{align*}
	\end{theorem}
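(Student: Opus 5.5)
The plan is to prove both inequalities by expanding a nonnegative quadratic form built from $\AR_1$. The only inputs are the weak formulation of \eqref{eq:condeq}, in the form $\int_\Omega A\nabla u_f^A\cdot\overline{\nabla v}\,\di x = \inner{f,v|_\Gamma}$ for $v\in H^1(\Omega)$, the identity \eqref{eq:lambdaweak}, and \eqref{eq:lambdaR}. No estimates on $\AI_j$ are used; the skew parts are simply carried along as the explicit cross terms. Throughout we use that $\AR_j,\AI_j$ are pointwise Hermitian, so that $\overline{\AI_2\xi\cdot\overline{\zeta}} = \AI_2\zeta\cdot\overline{\xi}$.

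\emph{Lower bound.} Start from $0\le \int_\Omega \AR_1\nabla(u_1-u_2)\cdot\overline{\nabla(u_1-u_2)}\,\di x$ and expand it into three terms.
\begin{itemize}
\item The first term is $\inner{f,\LambdaR_1 f}$ by \eqref{eq:lambdaR}.
\item The last term is $\int_\Omega\AR_1\nabla u_2\cdot\overline{\nabla u_2}\,\di x$.
\item The cross term is $-2\redel\int_\Omega\AR_1\nabla u_1\cdot\overline{\nabla u_2}\,\di x$. Write $\AR_1 = A_1-\I\AI_1$. The weak form tested with $v=u_2$ gives $\int_\Omega A_1\nabla u_1\cdot\overline{\nabla u_2}\,\di x=\inner{f,\Lambda_2 f}$, whose real part is $\inner{f,\LambdaR_2f}=\int_\Omega\AR_2\nabla u_2\cdot\overline{\nabla u_2}\,\di x$. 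The remaining piece contributes $\redel(-\I z)=\imdel z$.
\end{itemize}
Altogether the cross term equals $-2\inner{f,\LambdaR_2 f}-2\imdel\int_\Omega\AI_1\nabla u_1\cdot\overline{\nabla u_2}\,\di x$. Rearranging gives
\[
\inner{f,\LambdaR_1 f}-\inner{f,\LambdaR_2 f}\ge \int_\Omega(\AR_2-\AR_1)\nabla u_2\cdot\overline{\nabla u_2}\,\di x+2\imdel\int_\Omega\AI_1\nabla u_1\cdot\overline{\nabla u_2}\,\di x,
\]
which is the first claimed inequality.

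\emph{Upper bound.} Take $w=(\AR_1)^{-1}\AR_2\nabla u_2$ and expand $0\le\int_\Omega\AR_1(\nabla u_1-w)\cdot\overline{(\nabla u_1-w)}\,\di x$.
\begin{itemize}
\item The first term is again $\inner{f,\LambdaR_1f}$.
\item The last term is $\int_\Omega\AR_2(\AR_1)^{-1}\AR_2\nabla u_2\cdot\overline{\nabla u_2}\,\di x$.
\item The cross term is $-2\redel\int_\Omega\AR_2\nabla u_2\cdot\overline{\nabla u_1}\,\di x$, by self-adjointness of $\AR_1,\AR_2$. Here use \eqref{eq:lambdaweak} with $g=f$, i.e.\ $\int_\Omega A_2\nabla u_2\cdot\overline{\nabla u_1}\,\di x=\inner{f,\Lambda_1f}$, together with $\AR_2=A_2-\I\AI_2$. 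This gives real part $\inner{f,\LambdaR_1 f}+\imdel\int_\Omega\AI_2\nabla u_2\cdot\overline{\nabla u_1}\,\di x$, and the Hermitian symmetry turns the last integral into $-\imdel\int_\Omega\AI_2\nabla u_1\cdot\overline{\nabla u_2}\,\di x$.
\end{itemize}
The first term combines with the cross term, leaving
\[
\inner{f,\LambdaR_1 f}\le\int_\Omega\AR_2(\AR_1)^{-1}\AR_2\nabla u_2\cdot\overline{\nabla u_2}\,\di x+2\imdel\int_\Omega\AI_2\nabla u_1\cdot\overline{\nabla u_2}\,\di x.
\]
Subtracting $\inner{f,\LambdaR_2f}=\int_\Omega\AR_2(\AR_1)^{-1}\AR_1\nabla u_2\cdot\overline{\nabla u_2}\,\di x$ yields the second inequality.

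The argument is elementary, and the main obstacle is bookkeeping: keeping track of which solution is the test function in each weak identity, and getting the signs of the $\imdel$ terms right. That sign comes from $\redel(-\I z)=\imdel z$ combined with the conjugation swap in the upper bound. As a check, both extra terms vanish when $A_1=A_2$: then $u_1=u_2$ and $\AI_j\nabla u_2\cdot\overline{\nabla u_2}$ is real. This confirms the claimed optimality.
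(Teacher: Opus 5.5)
Your proposal is correct and follows essentially the same route as the paper. The paper first derives the two mixed-term identities \eqref{eq:mixed1}--\eqref{eq:mixed2} from the weak formulation, using $\AR_j = A_j - \I\AI_j$ and $\redel(-\I z)=\imdel z$. It then adds $\int_\Omega \AR_1\nabla(u_1-u_2)\cdot\overline{\nabla(u_1-u_2)}\,\di x\ge 0$ for the lower bound and subtracts $\int_\Omega \bigl\lvert(\AR_1)^{1/2}\nabla u_1-(\AR_1)^{-1/2}\AR_2\nabla u_2\bigr\rvert^2\,\di x\ge 0$ for the upper bound; the latter is exactly your quadratic form with $w=(\AR_1)^{-1}\AR_2\nabla u_2$, so your sign bookkeeping reproduces the paper's computation.
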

	\begin{proof}
		We start by noting that 
		\begin{align} \label{eq:mixed1}
			\redel \int_\Omega \AR_1\nabla u_1\cdot\overline{\nabla u_2}\,\di x &= \redel \int_\Omega (A_1 - \I \AI_1)\nabla u_1\cdot\overline{\nabla u_2}\,\di x \notag \\
			&= \redel \int_\Omega A_2\nabla u_2\cdot\overline{\nabla u_2}\,\di x - \redel \I\int_\Omega \AI_1\nabla u_1\cdot\overline{\nabla u_2}\,\di x \notag \\
			&= \int_\Omega \AR_2 \nabla u_2\cdot\overline{\nabla u_2}\,\di x + \imdel \int_\Omega \AI_1 \nabla u_1\cdot\overline{\nabla u_2}\,\di x.
		\end{align}
		By swapping the indices in \eqref{eq:mixed1}, and using that $\AR_2$ and $\AI_2$ are self-adjoint, we also have
		\begin{equation} \label{eq:mixed2}
			\redel \int_\Omega \AR_2\nabla u_1\cdot\overline{\nabla u_2}\,\di x = \int_\Omega \AR_1 \nabla u_1\cdot\overline{\nabla u_1}\,\di x - \imdel \int_\Omega \AI_2 \nabla u_1\cdot\overline{\nabla u_2}\,\di x.
		\end{equation}
		
		Using \eqref{eq:mixed1} and that $\AR_1$ is positive definite, we have
		\begin{align*}
			\int_\Omega (\AR_2 - \AR_1)\nabla u_2\cdot\overline{\nabla u_2}\,\di x + 2\imdel\int_\Omega \AI_1\nabla u_1\cdot\overline{\nabla u_2}\,\di x \hspace{-7cm}& \\
			&= \int_\Omega (\AR_2 - \AR_1)\nabla u_2\cdot\overline{\nabla u_2}\,\di x + 2\redel \int_\Omega \AR_1\nabla u_1\cdot\overline{\nabla u_2}\,\di x - 2\int_\Omega \AR_2 \nabla u_2\cdot\overline{\nabla u_2}\,\di x \\
			&\leq \int_\Omega (\AR_2 - \AR_1)\nabla u_2\cdot\overline{\nabla u_2}\,\di x + 2\redel \int_\Omega \AR_1\nabla u_1\cdot\overline{\nabla u_2}\,\di x - 2\int_\Omega \AR_2 \nabla u_2\cdot\overline{\nabla u_2}\,\di x\\
			&\phantom{={}} + \int_\Omega \AR_1\nabla(u_1-u_2)\cdot\overline{\nabla(u_1-u_2)}\,\di x\\
			&= \int_\Omega \AR_1\nabla u_1\cdot\overline{\nabla u_1}\,\di x - \int_\Omega \AR_2 \nabla u_2\cdot\overline{\nabla u_2}\,\di x.
		\end{align*}
		That is, 
		\begin{equation*}
			\inner{f,(\LambdaR_1-\LambdaR_2)f} \geq \int_\Omega (\AR_2 - \AR_1)\nabla u_2\cdot\overline{\nabla u_2}\,\di x + 2\imdel\int_\Omega \AI_1\nabla u_1\cdot\overline{\nabla u_2}\,\di x.
		\end{equation*}
		
		Using \eqref{eq:mixed2}, and that $\AR_2(\AR_1)^{-1}(\AR_2 - \AR_1)$ is self-adjoint, we obtain
		\begin{align*}
			\int_\Omega \AR_2(\AR_1)^{-1}(\AR_2 - \AR_1) \nabla u_2\cdot\overline{\nabla u_2}\,\di x + 2\imdel \int_\Omega \AI_2\nabla u_1\cdot\overline{\nabla u_2}\,\di x \hspace{-9.5cm}& \\
			&= \int_\Omega \AR_2(\AR_1)^{-1}(\AR_2 - \AR_1) \nabla u_2\cdot\overline{\nabla u_2}\,\di x + 2\int_\Omega \AR_1 \nabla u_1\cdot\overline{\nabla u_1}\,\di x - 2\redel \int_\Omega \AR_2\nabla u_1\cdot\overline{\nabla u_2}\,\di x \\
			&\geq \int_\Omega \AR_2(\AR_1)^{-1}(\AR_2 - \AR_1) \nabla u_2\cdot\overline{\nabla u_2}\,\di x + 2\int_\Omega \AR_1 \nabla u_1\cdot\overline{\nabla u_1}\,\di x - 2\redel \int_\Omega \AR_2\nabla u_1\cdot\overline{\nabla u_2}\,\di x \\
			&\phantom{={}} - \int_\Omega \bigl\lvert (\AR_1)^{1/2}\nabla u_1 - (\AR_1)^{-1/2}\AR_2\nabla u_2 \bigr\rvert^2\,\di x \\
			&= \int_\Omega \AR_1\nabla u_1\cdot\overline{\nabla u_1}\,\di x - \int_\Omega \AR_2 \nabla u_2\cdot\overline{\nabla u_2}\,\di x.
		\end{align*}
		That is, 
		\begin{equation*}
			\inner{f,(\LambdaR_1-\LambdaR_2)f} \leq \int_\Omega \AR_2(\AR_1)^{-1}(\AR_2 - \AR_1) \nabla u_2\cdot\overline{\nabla u_2}\,\di x + 2\imdel \int_\Omega \AI_2\nabla u_1\cdot\overline{\nabla u_2}\,\di x. \qedhere
		\end{equation*}
	\end{proof}
	
	What remains is to establish estimates of
	\begin{equation} \label{eq:finalterm}
		\imdel \int_\Omega \AI_j\nabla u_1\cdot\overline{\nabla u_2}\,\di x
	\end{equation}
	with respect to $u_2$, in order to be able to combine such inequalities with localization as in Theorem~\ref{thm:locpot}.
	
	Let $N(A) \colon f\mapsto u_f^A$ be the solution operator for the PDE problem with coefficient $A$, then using Taylor's theorem (in Banach spaces) with integral remainder term, we have
	\begin{equation*}
		u_1 = u_2 + \int_0^1 \textup{D}\!N\bigl(A_2 + t(A_1-A_2);A_1-A_2\bigr)f\,\di t.
	\end{equation*}
	As $\AI_j$ is self-adjoint and due to the imaginary part in \eqref{eq:finalterm}, we have that 
	\begin{equation*} 
		\imdel \int_\Omega \AI_j\nabla u_2\cdot\overline{\nabla u_2}\,\di x = 0,
	\end{equation*}
	so we only have to consider the remainder term from Taylor's theorem. In the following we denote $A_t = A_2 + t(A_1-A_2)$, $u_t = u_f^{A_t}$, and let $w_t\in H^1_\diamond(\Omega)$ be the unique solution to
	\begin{equation} \label{eq:varderiv}
		\int_\Omega A_t\nabla w_t\cdot\overline{\nabla v}\,\di x = \int_\Omega (A_2-A_1)\nabla u_t\cdot\overline{\nabla v}\,\di x,
	\end{equation}
	for all $v\in H^1_\diamond(\Omega)$. From \cite[theorem 3.3]{Garde2022c}, we have
	\begin{equation*}
		\textup{D}\!N\bigl(A_2 + t(A_1-A_2);A_1-A_2\bigr)f = w_t.
	\end{equation*}
	Let $\bigcup_{t\in[0,1]}\suppm(\nabla w_t)\subseteq W$ for some measurable $W\subseteq \Omega$, and let $M = \suppm(\AI_j)$. Then \eqref{eq:finalterm} becomes
	\begin{align} \label{eq:finaltemest1}
		\abs{\imdel \int_\Omega \AI_j\nabla u_1\cdot\overline{\nabla u_2}\,\di x} &= \abs{\imdel \int_0^1\int_{W\cap M} \AI_j\nabla w_t\cdot\overline{\nabla u_2}\,\di x\,\di t} \notag \\
		&\leq \norm{\AI_j}_{\mathcal{H}(W\cap M)}\Bigl(\int_0^1\norm{\nabla w_t}_{L^2(W)^d}\,\di t \Bigr)\norm{\nabla u_2}_{L^2(W\cap M)^d}.
	\end{align}
	If $\AR_1 \geq \alpha I$ and $\AR_2 \geq \alpha I$ for some $\alpha>0$, we have $\AR_t \geq \alpha I$ as $A_t$ is a convex combination of $A_1$ and $A_2$. Hence, from \eqref{eq:varderiv} we have
	\begin{align*}
		\alpha \norm{\nabla w_t}_{L^2(W)^d}^2 &\leq \redel \int_\Omega A_t\nabla w_t\cdot\overline{\nabla w_t}\,\di x\\
		&= \redel \int_W (A_2-A_1)\nabla u_t\cdot\overline{\nabla w_t}\,\di x \\
		&\leq \norm{A_1-A_2}_{\mathcal{H}(W)}\norm{\nabla w_t}_{L^2(W)^d}\norm{\nabla u_t}_{L^2(W)^d},
	\end{align*}
	which implies
	\begin{equation*}
		\norm{\nabla w_t}_{L^2(W)^d} \leq \alpha^{-1}\norm{A_1-A_2}_{\mathcal{H}(W)}\norm{\nabla u_t}_{L^2(W)^d}.
	\end{equation*}
	So \eqref{eq:finaltemest1} becomes
	\begin{equation*}
		\abs{\imdel \int_\Omega \AI_j\nabla u_1\cdot\overline{\nabla u_2}\,\di x} \leq \alpha^{-1}\norm{\AI_j}_{\mathcal{H}(W\cap M)}\norm{A_1-A_2}_{\mathcal{H}(W)}\Bigl(\int_0^1\norm{\nabla u_t}_{L^2(W)^d}\,\di t\Bigr)\norm{\nabla u_2}_{L^2(W\cap M)^d}.
	\end{equation*}
	From the Lax--Milgram lemma, we have
	\begin{equation*}
		\norm{\nabla u_t}_{L^2(W)^d} \leq \norm{\nabla u_t}_{L^2(\Omega)^d} \leq \alpha^{-1}C_1\norm{\nu\cdot (A_2\nabla u_2)}_{H^{-1/2}(\partial\Omega)},
	\end{equation*}
	since $\AR_t \geq \alpha I$ for all $t\in[0,1]$ and the Neumann condition coincides for all $u_t$ on $\partial\Omega$ (and with $t=0$ corresponding to $u_2$). Here $C_1$ is the operator norm of the Dirichlet trace operator in $\mathscr{L}(H^1_\diamond(\Omega),H^{1/2}(\partial\Omega))$, where $H^1_\diamond(\Omega)$ is equipped with the norm $v\mapsto\norm{\nabla v}_{L^2(\Omega)^d}$. We moreover have that
	\begin{equation*}
		\norm{\nu\cdot (A_2\nabla u_2)}_{H^{-1/2}(\partial\Omega)} \leq C_2\norm{A_2}_{\mathcal{H}(\Omega)}\norm{\nabla u_2}_{L^2(\Omega)^d},
	\end{equation*}
	where $C_2$ is the operator norm of the mapping $F\mapsto \nu\cdot F|_{\partial\Omega}$ (trace of a normal component of a vector function) in $\mathscr{L}(H_{\textup{div}}(\Omega),H^{-1/2}(\partial\Omega))$. Hence, $C = C_1C_2$ only depends on $\Omega$, and in total we have
	\begin{equation*}
		\abs{\imdel \int_\Omega \AI_j\nabla u_1\cdot\overline{\nabla u_2}\,\di x} \leq \tfrac{C}{\alpha^2}\norm{\AI_j}_{\mathcal{H}(W\cap M)}\norm{A_1-A_2}_{\mathcal{H}(W)}\norm{A_2}_{\mathcal{H}(\Omega)}\norm{\nabla u_2}_{L^2(\Omega)^d}\norm{\nabla u_2}_{L^2(W\cap M)^d}.
	\end{equation*}
	So it is indeed possible to estimate \eqref{eq:finalterm} solely in terms of $u_2$, and such that the bound vanishes in case $A_1 = A_2$ or if $\AI_j = 0$. However, we still have the norm of $u_2$ on all of $\Omega$, which remains insufficient to avoid additional conditions on a background conductivity in inclusion detection. 
	
	\bibliographystyle{plain}

\end{document}